\def\Xint#1{\mathchoice 
{\XXint\displaystyle\textstyle{#1}}% 
{\XXint\textstyle\scriptstyle{#1}}% 
{\XXint\scriptstyle\scriptscriptstyle{#1}}% 
{\XXint\scriptscriptstyle\scriptscriptstyle{#1}}% 
\!\int} 
\def\XXint#1#2#3{{\setbox0=\hbox{$#1{#2#3}{\int}$} 
\vcenter{\hbox{$#2#3$}}\kern-.5\wd0}} 
\def\dashint{\Xint-}
\newcommand{\ra}{\rightarrow}
\newcommand{\bey}{\begin{eqnarray*}}
\newcommand{\eey}{\end{eqnarray*}}
\newcommand{\ba}{\begin{align}}
\newcommand{\ea}{\end{align}}
\newcommand{\bea}{\begin{align*}}
\newcommand{\ena}{\end{align*}}
\newcommand{\be}{\begin{equation}}
\newcommand{\ee}{\end{equation}}
\newcommand{\R}{\mathbb R}
\newcommand{\Z}{\mathbb Z}
\newcommand{\N}{\mathbb N}
\newcommand{\Sp}{\mathcal S }
\newcommand{\ep}{\epsilon}
\newcommand{\bc}{\begin{center}}
\newcommand{\ec}{\end{center}}
\newcommand{\al}{\alpha}
\newcommand{\supp}{\mathrm{supp}}
\DeclareMathOperator*{\sgn}{sgn}
\newtheorem{theorem}{Theorem}[section]
\newtheorem{lemma}[theorem]{Lemma}
\newtheorem{corollary}[theorem]{Corollary}
\theoremstyle{definition}
\theoremstyle{remark}
\numberwithin{equation}{section}
\begin{document}

\author{David Cruz-Uribe, OFS}
\address{David Cruz-Uribe, OFS \\ Department of Mathematics \\ University of Alabama \\
Tuscaloosa, AL 35487, USA}
\email{dcruzuribe@ua.edu}

\author{Kabe Moen}
\address{Kabe Moen \\Department of Mathematics \\ University of Alabama \\
  Tuscaloosa, AL 35487, USA}
\email{kabe.moen@ua.edu}

\author{Quan Minh Tran}
\address{Quan Minh Tran \\Department of Mathematics \\ University of Alabama \\
  Tuscaloosa, AL 35487, USA}
\email{qmtran@crimson.ua.edu}

\subjclass[2000]{42B20, 42B25}

\title[Bump conditions for commutators]{New oscillation classes and two weight bump conditions for commutators.}
\date{}

\begin{abstract}
In this paper we consider two weight bump conditions for higher order
commutators.  Given $b$ and a Calder\'on-Zygmund operator $T$, define
the commutator $T^1_bf=[T,b]f= bTf-T(bf)$, and for $m\geq 2$ define
the iterated commutator $T^m_b f = [b,T_b^{m-1}]f$.  Traditionally,
commutators are defined for functions $b\in BMO$, but we show that if
we replace $BMO$ by an oscillation class first introduced by
P\'erez~\cite{MR1317714}, we can give a range of  sufficient
conditions on a pair of weights $(u,v)$ for  $T^m_b : L^p(v)
\rightarrow L^p(u)$ to be bounded.  
Our results generalize work of
the first two authors in~\cite{MR2918187}, and more recent work by
Lerner, {\em et al.}~\cite{LORR}.  We also prove necessary conditions
for the iterated commutators to be bounded, generalizing results of
Isralowitz, {\em et al.}~\cite{IsPoTr}.
\end{abstract}

\thanks{The first author is supported by research funds from the Dean
  of the College of Arts \& Sciences, the University of Alabama. The
  second author is supported by Simons Collaboration Grant for
  Mathematicians, 160427. A version of this paper will serve as a chapter in
  the third author's Ph.D. dissertation. }

\maketitle

\section{Introduction}
\label{section:intro}

In this paper we will study the commutator 
$$[b,T]f(x)=b(x)Tf(x)-T(bf)(x),$$ 
where $T$ is a linear operator and $b$ is a function. If we let
$T^1_b=[b,T]$ and for $m\in \N$ define $T^m_b=[b,T^{m-1}_b]$, then
$T^m_b$ is referred to as an iterated commutator. We will consider $T$
to be a Calder\'on-Zygmund operator or CZO (see Section 2 for relevant
definitions).  It is well known that such operators are well behaved
when $b\in BMO$:
$$\|b\|_{BMO}=\sup_Q \dashint_Q |b(x)-b_Q|\,dx<\infty$$
where $b_Q=\dashint_Q b\,dx=\frac{1}{|Q|}\int_Qb(x)\,dx$. Coifman,
Rochberg, and Weiss \cite{CRW} showed that if $T$ is a
Calder\'on-Zygmund operator then $[b,T]$ is bounded on $L^p(\R^n)$ if
$b\in BMO$. We are interested in two weight inequalities of the form
\begin{equation*}\left(\int_{\R^n}|T^m_bf|^pu\right)^{\frac1p}
  \leq C\left(\int_{\R^n}|f|^pv\right)^{\frac1p}
\end{equation*}
where $(u,v)$ is a pair of weights. 

Two weight inequalities for CZOs are significantly more difficult than
the corresponding one weight inequalities. Sawyer was the first to
find a complete characterization for the Hardy-Littlewood maximal
operator and operators with positive kernels in \cite{Saw1,Saw2}.  He
proved that such operators are bounded from $L^p(v)$ to $L^p(u)$ if
and only if certain testing conditions are satisfied. Roughly
speaking, the testing conditions boil down to a restricted boundedness condition
on the family $\{{\mathbf 1}_Qv^{-\frac{p'}{p}}\}_Q$ and a similar
dual condition.  Recently, a complete characterization was found for
the Hilbert transform by Lacey, {\em et al.}~\cite{MR3285857,Lac1}.  While these testing
conditions are necessary and sufficient, they are difficult to verify
in practice since they involve the operator itself.

We will examine conditions known as \emph{bump conditions} on the pair of weights $(u,v)$ that are sufficient and sharp.  The natural two weight generalization of the one weight condition given by 
\begin{equation}\label{twoAp}
  \sup_Q\left(\dashint_Q u\right)^{\frac1p}
  \left(\dashint_Q v^{-\frac{p'}{p}}\right)^{\frac{1}{p'}}<\infty,
\end{equation}
is often necessary but almost never sufficient for
boundedness. However, it is possible to modify this condition to get a
``universal'' sufficient condition.   Condition \eqref{twoAp} can be written as
$$\sup_Q\|u^{\frac1p}\|_{p,Q}\|v^{-\frac1p}\|_{p',Q}<\infty$$
where $\|\cdot\|_{p,Q}$ and $\|\cdot\|_{p',Q}$ correspond to the $L^p$ and
$L^{p'}$ averages over $Q$ respectively.  If we enlarge or `bump' these
averages in the scale of Orlicz spaces, we can get two weight
boundedness of many classical operators.  To understand this `enlarging' we
need a few definitions. We say $A(t)$ defined on $[0,\infty)$ is a
Young function if it is increasing, convex, $A(0)=0$, and
$A(t)/t\ra \infty$ as $t\ra\infty$. We will also consider $A(t)=t$ to
be a Young function by convention. Given a Young function $A$ there
exists another Young function $\bar{A}$, called the associate function
of $A$, such that $\bar{A}^{-1}(t)A^{-1}(t)\approx t$. If $A$ is a
Young function, then we define the Orlicz average on a cube $Q$ of a function
$f$ by
\begin{equation}\label{eqn:orlicz}
  \|f\|_{A,Q}
  =\inf\left\{\lambda>0:\dashint_Q
    A\Big(\frac{|f(x)|}{\lambda}\Big)\,dx\leq 1\right\}.
\end{equation}
When $A(t)=t^p\log(e+t)^a$ for some $a\in \R$ we will write 
$$\|f\|_{A,Q}=\|f\|_{L^p(\log L)^a,Q}$$
and when $B(t)=\exp(t^a)-1$ we will write
$$\|f\|_{B,Q}=\|f\|_{\exp(L^a),Q}.$$
We say that a Young function $A$ belongs to $B_p$ if the following growth condition is satisfied:
\begin{equation}\label{Bp}\int_1^\infty\frac{A(t)}{t^p}\frac{dt}{t}<\infty.\end{equation}
A typical $B_p$ Young function is given by $A(t)=t^{p-\delta}$ or $A(t)=t^p(\log(e+t))^{-1-\delta}$ for some $\delta>0$.

The two weight bump theory for CZOs is fairly well developed.  Early
work on the bump conditions for CZOs was done by the first author and
P\'erez \cite{MR1793688,cruz-uribe-perez02}. They formulated the
so-called two weight bump conjecture in \cite{cruz-uribe-perez02},
conjecturing that if $T$ is a CZO, $A$ and $B$ are Young functions
with $\bar{A}\in B_{p'}$ and $\bar{B}\in B_p$, and the pair of weights
$(u,v)$ satisfies
\begin{equation}\label{bumpCZO}
  \sup_Q\|u^{\frac1p}\|_{A,Q}\|v^{-\frac1p}\|_{B,Q}<\infty,
\end{equation}
then 
\begin{equation*}\left(\int_{\R^n}|Tf|^pu\right)^{\frac1p}\leq C\left(\int_{\R^n}|f|^pv\right)^{\frac1p}.\end{equation*}
This conjecture was first shown for specific operators (e.g.~the
Hilbert transform, Riesz transforms, etc.) or in a restricted
range on $p$ \cite{CMP07,CMP09,MR3127385}. The full conjecture was
proved by Lerner in \cite{MR3127385}. In particular, he showed that
CZOs are bounded from $L^p(v)$ to $L^p(u)$ if $(u,v)$ satisfy
\eqref{bumpCZO} with $A(t)=t^p\log(e+t)^{p-1+\delta}$ and
$B(t)=t^{p'}\log(e+t)^{p'-1+\delta}$ (see Section~\ref{section:prelim} for the fact that
$\bar{A}\in B_{p'}$ and $\bar{B}\in B_p$).  Recent work on bump
conditions for CZOs involves separating the bump conditions, namely,
showing that the smaller condition
$$\|u^{\frac1p}\|_{A,Q}\left(\dashint_Q v^{-\frac{p'}{p}}\right)^{\frac{1}{p'}}+\left(\dashint_Q u\right)^{\frac1p}\|v^{-\frac1p}\|_{B,Q}<\infty$$
is sufficient for the boundedness of $T$. (For an example showing this
condition is weaker, see~\cite{MR3302574}.)  Determining the sharpest separated
bump conditions for CZOs is still an active area of research (see
\cite{MR3167497,MR3532130,Lerner20}).

For commutators of operators the full picture is still very much incomplete. The first preliminary results for commutators were done by Cruz-Uribe and P\'erez \cite{MR1793688, cruz-uribe-perez02}. The first two authors \cite{MR2918187} showed that if $T$ is a CZO, $b\in BMO$, and
$$\sup_Q\|u^{\frac1p}\|_{L^p(\log L)^{2p-1+\delta},Q}\|v^{-\frac1p}\|_{L^{p'}(\log L)^{2p'-1+\delta},Q}<\infty,$$
then
\begin{equation}\label{commineq}\left(\int_{\R^n}|[b,T]f|^pu\right)^{\frac1p}\leq
  C\|b\|_{BMO}\left(\int_{\R^n}|f|^pv\right)^{\frac1p}.\end{equation}
There is difference in the powers on the logarithmic factors
between the bumps for CZOs versus the bumps for commutators of
CZOs. That is, the powers for CZOs are $p-1+\delta$ and $p'-1+\delta$,
whereas the powers for the commutators are $2p-1+\delta$ and
$2p'-1+\delta$. This additional factor of $2$ in the exponent of the
logarithm is a consequence of the more singular nature of commutators
and is sharp in the sense that one cannot take $\delta=0$ (see
\cite{MR1481632}). In general, commutators behave worse than the
corresponding CZO; for an example of this fact in the one weight
setting,  we refer the reader to \cite{MR2869172}.

A close examination of the proof in \cite{MR2918187} shows that one
can actually separate the bumps in a way (see Remark 2.13 and the
proof Lemma 4.1 in \cite{MR2918187}) to show that if
$\bar{A}\in B_{p'}$ and $\bar{B}\in B_{p}$ and the weights satisfy
\begin{equation}\label{sepcomm}
  \sup_Q\|u^{\frac1p}\|_{L^p(\log
    L)^{2p-1+\delta}}\|v^{-\frac1p}\|_{B,Q}
  +\sup_Q\|u^{\frac1p}\|_{A,Q}\|v^{-\frac1p}\|_{L^p(\log
    L)^{2p-1+\delta}}<\infty,\end{equation}
then inequality \eqref{commineq} holds.

Recently, Lerner, Ombrosi, and Riviera-R\'ios \cite{LORR} have
developed the first techniques for the iterated commutators $T^m_b$.
They showed that if $b\in BMO$, $\bar{A}\in B_{p'}$ and
$\bar{B}\in B_p$, and
\begin{equation}\label{sepcomm}\sup_Q\|u^{\frac1p}\|_{L^p(\log
    L)^{(m+1)p-1+\delta}}\|v^{-\frac1p}\|_{B,Q}
  +\sup_Q\|u^{\frac1p}\|_{A,Q}\|v^{-\frac1p}\|_{L^p(\log
    L)^{(m+1)p-1+\delta}}<\infty, \end{equation}
then
$$\left(\int_{\R^n}|T^m_bf|^pu\right)^{\frac1p}\leq
C\|b\|_{BMO}^m\left(\int_{\R^n}|f|^pv\right)^{\frac1p}.$$
As we were completing this project, we learned that Isralowitz, Pott
and Treil \cite{IsPoTr} have independently been studying the
commutators $[b,T]$ in the two-weight and matrix setting.  They
introduced a class of functions $b$, different from $BMO$, that
interact with the weights $u$ and $v$.  Namely, they are able to show
that the mixed norm condition
\begin{multline*}
  \sup_Q\big\|\|(b(x)-b(y))u^{\frac1p}(x)\|_{A_x,Q}v(y)^{-\frac1p}\big\|_{B_y,Q}\\
  +\sup_Q\big\|\|(b(x)-b(y))v(y)^{-\frac1p}\|_{B_y,Q}u(x)^{\frac1p}\big\|_{A_x,Q}<\infty,
\end{multline*}
where $\bar{A}\in B_{p'}$ and $\bar{B}\in B_p$, is sufficient for the
boundedness $[b,T]:L^p(v)\ra L^p(u)$. Such conditions are related to
our condition below in Theorem \ref{thm:main}; however, our results
hold for general iterated commutators $T^m_b$, when $m>1$.

We remark in passing that there are several interesting results
concerning two weight results for commutators when $u,v\in A_p$ (see
\eqref{eqn:Ap} below for the definition of the $A_p$ class). When the
stronger side assumption that $u,v\in A_p$ is made, it is possible to
provide a complete characterization of the two weight boundedness of
$T^m_b$ in terms of a weighted $BMO$ space.  Such results were
initiated by Bloom \cite{MR805955} for the Hilbert transform and have
since seen a resurgence.  We refer the interested reader to
\cite{MR3451366,MR3606434,MR3919564} and the references therein.

\medskip

We now turn to our results.  We will consider a new bump condition on
weights $u$ and $v$ that directly interact with the
multiplier $b\in L^1_{\mathsf{loc}}(\R^n)$.  Our first main result is
the following.

\begin{theorem} \label{thm:main}
  Suppose $1<p<\infty$, $T$ is a CZO, $m\in \N$ and $b\in
  L^1_{\textsf{loc}}(\R^n)$. If $(u,v)$ is a pair of weights and
  $A,B,C,D$ are Young functions with $\bar{A},\bar{C}\in B_{p'}$,
  $\bar{B},\bar{D}\in B_p$, and  such that
  \begin{multline}\label{twoweightBMO}
    \mathsf{K}=\sup_Q \big\|u^\frac1p\big\|_{A,Q}\big\|(b-b_Q)^mv^{-\frac1p}\big\|_{B,Q}
+\sup_Q \big\|(b-b_Q)^mu^\frac1p\big\|_{C,Q}\big\|v^{-\frac1p}\big\|_{D,Q}<\infty,\end{multline}
then
$$\left(\int_{\R^n}|T^m_b f|^pu\right)^{\frac1p}\lesssim \mathsf{K}\left(\int_{\R^n}|f|^pv\right)^{\frac1p}.$$
\end{theorem}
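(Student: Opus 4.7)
The plan is to reduce the claimed $L^p(v)\to L^p(u)$ estimate for $T_b^m$ to a bound on two extreme sparse forms, and then verify these bounds by duality and Orlicz--H\"older. To start, I would invoke the pointwise sparse domination principle for iterated commutators due to Lerner, Ombrosi, and Rivera-R\'ios~\cite{LORR}: there is a finite family of sparse collections $\mathcal{S}$ such that
\begin{equation*}
|T_b^m f(x)|\lesssim \sum_{k=0}^{m}\binom{m}{k}\mathcal{A}^{(k)}_{b,\mathcal{S}}f(x),\qquad \mathcal{A}^{(k)}_{b,\mathcal{S}}f(x):=\sum_{Q\in\mathcal{S}} |b(x)-b_Q|^{m-k}\Bigl(\dashint_Q |b-b_Q|^k|f|\Bigr)\mathbf{1}_Q(x).
\end{equation*}
The intermediate indices $0<k<m$ can be absorbed into the extremes by convexity: weighted AM--GM applied to $(|b(x)-b_Q|^m)^{(m-k)/m}(|b(y)-b_Q|^m)^{k/m}$ yields $|b(x)-b_Q|^{m-k}|b(y)-b_Q|^k\le \tfrac{m-k}{m}|b(x)-b_Q|^m+\tfrac{k}{m}|b(y)-b_Q|^m$, and integrating in $y$ against $|f(y)|\,dy/|Q|$ gives $\mathcal{A}^{(k)}_{b,\mathcal{S}}f\le \tfrac{m-k}{m}\mathcal{A}^{(0)}_{b,\mathcal{S}}f+\tfrac{k}{m}\mathcal{A}^{(m)}_{b,\mathcal{S}}f$. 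Consequently it suffices to bound $\mathcal{A}^{(0)}_{b,\mathcal{S}}$ and $\mathcal{A}^{(m)}_{b,\mathcal{S}}$ from $L^p(v)$ to $L^p(u)$ with norm $\lesssim \mathsf{K}$.

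For $\mathcal{A}^{(m)}_{b,\mathcal{S}}$ I would proceed by duality, testing against $g\ge 0$ with $\|g\|_{L^{p'}(u)}=1$:
\begin{equation*}
\int\mathcal{A}^{(m)}_{b,\mathcal{S}}f\cdot gu\,dx=\sum_{Q\in\mathcal{S}}|Q|\,\Bigl(\dashint_Q gu\Bigr)\Bigl(\dashint_Q |b-b_Q|^m|f|\Bigr).
\end{equation*}
Splitting $gu=u^{1/p}\cdot gu^{1/p'}$ and $|b-b_Q|^m|f|=(|b-b_Q|^m v^{-1/p})\cdot(|f|v^{1/p})$ and applying generalized Orlicz--H\"older with the conjugate pairs $(A,\bar A)$ and $(B,\bar B)$, the bump hypothesis bounds each $Q$-term by $\mathsf{K}\,|Q|\,\|gu^{1/p'}\|_{\bar A,Q}\|fv^{1/p}\|_{\bar B,Q}$. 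Sparseness ($|Q|\le 2|E_Q|$ with pairwise disjoint $E_Q\subset Q$) converts the sum to $\int_{\R^n} M_{\bar A}(gu^{1/p'})\,M_{\bar B}(fv^{1/p})\,dx$, where $M_\Phi$ denotes the Orlicz maximal operator built from $\Phi$. Since $\bar A\in B_{p'}$ and $\bar B\in B_p$ force these Orlicz maximal operators to be bounded on $L^{p'}$ and $L^p$ respectively, one final application of H\"older closes the loop. The bound for $\mathcal{A}^{(0)}_{b,\mathcal{S}}$ is entirely symmetric, using the second half of $\mathsf{K}$ together with the conjugate pairs $(C,\bar C)$, $(D,\bar D)$.

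The main technical hurdle is the first step: because $b$ is only assumed to lie in $L^1_{\mathsf{loc}}$, any classical oscillation norm of $b$ may be infinite, and one must ensure that the standard stopping-time construction still produces an a.e.\ finite sparse family for $T_b^m$. This analysis has already been carried out in \cite{LORR} under the bare hypothesis $b\in L^1_{\mathsf{loc}}$ and may be quoted directly; everything else in the argument is a routine Orlicz-bump duality computation once the sparse domination and the convexity reduction are in hand.
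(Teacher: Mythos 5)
Your proposal follows essentially the same route as the paper: pointwise sparse domination for $T^m_b$, reduction of the intermediate $0<k<m$ terms to the two extremes (the paper uses $|b(x)-b_Q|^{m-k}|b(y)-b_Q|^k\le\max\{|b(x)-b_Q|,|b(y)-b_Q|\}^m\le|b(x)-b_Q|^m+|b(y)-b_Q|^m$ where you use weighted AM--GM, a cosmetic variant), then duality plus Orlicz--H\"older plus sparseness to reduce to $L^p$/$L^{p'}$ boundedness of the Orlicz maximal operators $M_{\bar B}$ and $M_{\bar A}$ via the $B_p$ and $B_{p'}$ hypotheses, with the $k=m$ term handled symmetrically as the formal adjoint. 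The only inaccuracy is the attribution: \cite{LORR} proves the sparse domination for $m=1$, while the extension to iterated commutators $m\ge 2$ under the bare assumption $b\in L^1_{\mathrm{loc}}$ that you need here is due to Iba\~nez-Firnkorn and Rivera-R\'ios \cite{MR4050113}, which is what the paper invokes.
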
 
%\noindent When $m=1$ conditions like \eqref{twoweightBMO} are considered in \cite{IsPoTr} (see the remarks on page 7).

%\begin{theorem} Suppose $0<\al<n$, $1<p<\infty$, $b\in L^1_{\text{loc}}(\R^n)$, and $(u,v)$ are a pair of weights.  If $A,B,C,D$ are Young functions with $\bar{A},\bar{C}\in B_{p'}$ and $\bar{B},\bar{D}\in B_p$ such that
%\begin{equation}\label{altwoweightBMO}\mathsf{K}=\sup_Q |Q|^{\frac{\al}{n}}\big\||b-b_Q|^mu^\frac1p\big\|_{A,Q}\big\|v^{-\frac1p}\big\|_{B,Q}+\sup_Q |Q|^{\frac{\al}{n}} \big\|u^\frac1p\big\|_{C,Q}\big\||b-b_Q|^mv^{-\frac1p}\big\|_{D,Q}<\infty,\end{equation}
%then
%$$\left(\int_{\R^n}|(I_\al)^m_b f|^pu\right)^{\frac1p}\lesssim \mathsf{K}\left(\int_{\R^n}|f|^pv\right)^{\frac1p}.$$
% \end{theorem}

As a consequence of this result we can prove more traditional bump
conditions by assuming that the multiplier $b$ lies in an oscillation
class related to $BMO$.  Given a Young function $\Phi$ and the
associated Orlicz space $L^\Phi$, we define the class
$\mathsf{Osc}(\Phi)=\mathsf{Osc}(L^\Phi)$ to be the class of
functions $b\in L^1_{\mathsf{loc}}(\R^n)$ whose semi-norm
$$\|b\|_{\mathsf{Osc}(\Phi)}=\sup_Q\|b-b_Q\|_{\Phi,Q}$$
is finite.  The space $BMO$ corresponds to $\mathsf{Osc}(\Phi)$ for
$\Phi(t)=t$; equivalently $BMO=\mathsf{Osc}(L^1)$.  It is well-known, by
the John-Nirenberg inequality, that $BMO$ functions are exponentially
integrable. Specifically, if $b\in BMO$ then there exist constants
$c,C>0$ such that for any cube $Q$,
\begin{equation}\label{eqn:JN}
  \dashint_Q \exp\left(\frac{c|b(x)-b_Q|}{\|b\|_{BMO}}\right)\,dx\leq C.\end{equation}
This exponential integrability allows us to view $BMO$ as the space of
functions with exponential oscillation.  By the definition of the
Orlicz norm \eqref{eqn:orlicz} and the John-Nirenberg inequality
\eqref{eqn:JN} we have that
$$\|b\|_{\mathsf{Osc}(\Phi)}\lesssim \|b\|_{BMO},$$
where $\Phi(t)=e^t-1$.  Moreover, since $\Phi(t)$ is a Young function
we have $\Phi(t)\gtrsim t$ and hence $\mathsf{Osc}(\Phi)\subseteq BMO$
with
$$\|b\|_{BMO}\lesssim \|b\|_{\mathsf{Osc}(\Phi)}.$$
Thus, that $BMO=\mathsf{Osc}(\Phi)$ where $\Phi(t)=\exp t-1$; for this
Young function it is customary to write $\exp L$ instead of $L^\Phi$,
and we will write $BMO=\mathsf{Osc}(\exp L)$.

The class
$\mathsf{Osc}(\Phi)$ was first introduced in \cite{MR1317714}.  The
importance of condition \eqref{twoweightBMO} is that it allows us to study
commutators for $b\in\mathsf{Osc}(\Phi)$ for a general $\Phi$. This
approach was initiated in \cite{MR4055156}. In particular, we have the
following general result.

\begin{theorem} \label{thm:oscclassbump} Suppose $1<p<\infty$, $T$ is
  a CZO, and $m\in \N$. Further, suppose $A,B,C,D,X,Y, \Phi$ are Young
  functions with $\bar{A},\bar{C}\in B_{p'}$,
  $\bar{B},\bar{D}\in B_{p}$, and $X,Y$ satisfy
  \begin{equation} \label{eqn:checkyoung}
    X^{-1}(t)\lesssim \frac{B^{-1}(t)}{\Phi^{-1}(t)^m}\ \ \text{and} \
    \
    Y^{-1}(t)\lesssim \frac{C^{-1}(t)}{\Phi^{-1}(t)^m}
  \end{equation}
for large $t$.  If $b\in \mathsf{Osc}(\Phi)$ and $(u,v)$ is a pair of weights such that
$$\mathsf{K}=\sup_Q\|u^{\frac1p}\|_{A,Q}\|v^{-\frac{1}{p}}\|_{X,Q}
+\sup_Q\|u^{\frac1p}\|_{Y,Q}\|v^{-\frac{1}{p}}\|_{D,Q}<\infty,$$
 then 
 $$\|T^m_bf\|_{L^p(u)}\lesssim \mathsf{K} \|b\|_{\mathsf{Osc}(\Phi)}^m\|f\|_{L^p(v)}.$$
\end{theorem}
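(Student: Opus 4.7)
The plan is to reduce Theorem~\ref{thm:oscclassbump} to Theorem~\ref{thm:main} via a multi-factor generalized Orlicz H\"older inequality. Concretely, I aim to show that the hypotheses of Theorem~\ref{thm:oscclassbump} together with $b \in \mathsf{Osc}(\Phi)$ imply that the pair $(u,v)$ and the Young functions $A, B, C, D$ satisfy the hypothesis \eqref{twoweightBMO} of Theorem~\ref{thm:main} with constant bounded by $\|b\|_{\mathsf{Osc}(\Phi)}^m \mathsf{K}$.

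The central tool is the $(m+1)$-factor generalized Orlicz H\"older inequality: if $\Phi_1, \ldots, \Phi_{m+1}, \Psi$ are Young functions with $\prod_{i=1}^{m+1} \Phi_i^{-1}(t) \lesssim \Psi^{-1}(t)$ for $t$ large, then for any functions $f_1, \ldots, f_{m+1}$ and any cube $Q$,
\begin{equation*}
\|f_1 f_2 \cdots f_{m+1}\|_{\Psi, Q} \lesssim \prod_{i=1}^{m+1} \|f_i\|_{\Phi_i, Q}.
\end{equation*}
I would apply this with $f_1 = \cdots = f_m = b - b_Q$, $f_{m+1} = v^{-1/p}$, $\Phi_1 = \cdots = \Phi_m = \Phi$, $\Phi_{m+1} = X$, and $\Psi = B$. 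The hypothesis \eqref{eqn:checkyoung}, which reads $\Phi^{-1}(t)^m X^{-1}(t) \lesssim B^{-1}(t)$ for large $t$, yields
\begin{equation*}
\|(b - b_Q)^m v^{-\tfrac{1}{p}}\|_{B, Q} \lesssim \|b - b_Q\|_{\Phi, Q}^m \|v^{-\tfrac{1}{p}}\|_{X, Q} \leq \|b\|_{\mathsf{Osc}(\Phi)}^m \|v^{-\tfrac{1}{p}}\|_{X, Q}.
\end{equation*}
A symmetric argument, with $B, X$, and $v^{-1/p}$ replaced by $C, Y$, and $u^{1/p}$ respectively, produces the matching bound
\begin{equation*}
\|(b - b_Q)^m u^{\tfrac{1}{p}}\|_{C, Q} \lesssim \|b\|_{\mathsf{Osc}(\Phi)}^m \|u^{\tfrac{1}{p}}\|_{Y, Q}.
\end{equation*}

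With both estimates in hand, I multiply the first by $\|u^{1/p}\|_{A, Q}$ and the second by $\|v^{-1/p}\|_{D, Q}$, take the supremum over cubes, and add to conclude that \eqref{twoweightBMO} holds with constant at most $\|b\|_{\mathsf{Osc}(\Phi)}^m \mathsf{K}$. Since $\bar{A}, \bar{C} \in B_{p'}$ and $\bar{B}, \bar{D} \in B_p$ by assumption, Theorem~\ref{thm:main} immediately yields the claimed norm inequality $\|T^m_b f\|_{L^p(u)} \lesssim \mathsf{K}\, \|b\|_{\mathsf{Osc}(\Phi)}^m \|f\|_{L^p(v)}$.

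The main technical point is to justify the multi-factor Orlicz H\"older inequality without requiring that the formal composition of $\Phi$ with a power itself be a Young function; indeed, dividing $B^{-1}$ by $\Phi^{-1}(t)^k$ can destroy convexity. This is most cleanly accomplished by iterating the two-factor H\"older inequality through intermediate Orlicz-type functionals, whose Luxemburg formula \eqref{eqn:orlicz} still makes sense without strict convexity of the defining function, or alternatively by a direct argument from Young's inequality as carried out in P\'erez~\cite{MR1317714} and its successors~\cite{MR4055156, LORR}. Once this lemma is in place, the rest of the argument is bookkeeping.
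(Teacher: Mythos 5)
Your reduction to Theorem~\ref{thm:main} is the same as the paper's, and the estimate you want to establish, namely $\|(b-b_Q)^m v^{-1/p}\|_{B,Q} \lesssim \|b\|_{\mathsf{Osc}(\Phi)}^m \|v^{-1/p}\|_{X,Q}$ and its twin with $C,Y,u^{1/p}$, is exactly what the paper proves. The difference lies in how that estimate is obtained. You treat $(b-b_Q)^m$ as a product of $m$ copies of $b-b_Q$ and invoke an $(m+1)$-factor Orlicz H\"older inequality, which then forces you to worry (as you do at the end) about whether the multi-factor version is available when intermediate functions fail to be convex. The paper sidesteps this entirely: it uses only the \emph{two}-factor Lemma~\ref{lem:orliczH}, applied with the auxiliary function $\Phi_m(t)=\Phi(t^{1/m})$. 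Since $\Phi_m^{-1}(t)=\Phi^{-1}(t)^m$, your hypothesis \eqref{eqn:checkyoung} is literally the two-factor condition $\Phi_m^{-1}(t)X^{-1}(t)\lesssim B^{-1}(t)$, and then the elementary Luxemburg-norm identity $\|f^m\|_{\Phi_m,Q}=\|f\|_{\Phi,Q}^m$ converts $\|(b-b_Q)^m\|_{\Phi_m,Q}$ into $\|b-b_Q\|_{\Phi,Q}^m \le \|b\|_{\mathsf{Osc}(\Phi)}^m$. Crucially, Lemma~\ref{lem:orliczH} as stated in the paper only requires the \emph{target} function to be Young --- the two source functions need only be continuous and strictly increasing --- so the possible non-convexity of $\Phi_m$ is harmless by design. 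Your route is correct, but you would need to supply or cite a multi-factor H\"older lemma that you currently only sketch, whereas the paper's $\Phi_m$ substitution delivers the same conclusion from the two-factor lemma already in hand.
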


%\begin{theorem} \label{thm:oscclassbumpfrac} Suppose $m\geq 1$, $0<\al<n$ and $A,B,C,D,X,Y,$ and $\Phi$ are Young functions with $\bar{A},\bar{C}\in B_p$ and $\bar{B},\bar{D}\in B_{p'}$.  Suppose $\Phi(t^{\frac1m})$ is a Young function and $X$ and $Y$ satisfy
%$$X^{-1}(t)\lesssim \frac{A^{-1}(t)}{\Phi^{-1}(t)^m}\ \ \text{and} \ \ Y^{-1}(t)\lesssim \frac{D^{-1}(t)}{\Phi^{-1}(t)^m}.$$
%If $b\in \mathsf{Osc}(\Phi)$ and $(u,\sigma)$ is a pair of weights such that
% $$\mathsf{K}=\sup_Q|Q|^{\frac{\al}{n}}\|u^{\frac1p}\|_{X,Q}\|v^{-\frac{1}{p}}\|_{B,Q}+\sup_Q|Q|^{\frac{\al}{n}}\|u^{\frac1p}\|_{C,Q}\|v^{-\frac{1}{p}}\|_{Y,Q}<\infty$$
% then 
% $$\|(I_\al)^m_bf\|_{L^p(u)}\lesssim \mathsf{K}\|b\|_{\mathsf{Osc}(\Phi)}^m\|f\|_{L^p(v)}.$$
%\end{theorem}

When $b\in BMO$ we may take $\Phi(t)=\exp t-1$ in Theorems
\ref{thm:oscclassbump} and we obtain the following result from
\cite[Theorem 1.1]{LORR}.  

\begin{corollary} \label{thm:BMObump} Suppose that $1<p<\infty$, $T$ is a CZO, $m\in \N$, and $A,D$ are Young functions with $\bar{A}\in B_{p'}$ and $\bar{D}\in B_p$.  If $b\in BMO$ and $(u,v)$ satisfy
 $$\mathsf{K}=\sup_Q\|u^{\frac1p}\|_{A,Q}\|v^{-\frac{1}{p}}\|_{L^{p'}(\log L)^{(m+1)p'-1+\delta},Q}+\sup_Q\|u^{\frac1p}\|_{L^p(\log L)^{(m+1)p-1+\delta},Q}\|v^{-\frac{1}{p}}\|_{D,Q}<\infty$$
for some $\delta>0$, then 
 $$\|T^m_bf\|_{L^p(u)}\lesssim \mathsf{K} \|b\|_{BMO}^m\|f\|_{L^p(v)}.$$
  \end{corollary}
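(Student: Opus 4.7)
The plan is to deduce this corollary as a direct specialization of Theorem \ref{thm:oscclassbump}. As noted in the discussion preceding that theorem, the John--Nirenberg inequality gives $BMO=\mathsf{Osc}(\exp L)$ with equivalent seminorms, so I take $\Phi(t)=e^{t}-1$, which yields $\Phi^{-1}(t)\approx \log(e+t)$ for $t$ large and $\|b\|_{\mathsf{Osc}(\Phi)}\approx \|b\|_{BMO}$. The Young functions $A,D$ are already admissible for Theorem \ref{thm:oscclassbump}, and the two polylogarithmic bumps in the hypothesis of the corollary, $X(t)=t^{p'}\log(e+t)^{(m+1)p'-1+\delta}$ and $Y(t)=t^{p}\log(e+t)^{(m+1)p-1+\delta}$, will play the role of the $X,Y$ in \eqref{eqn:checkyoung}. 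What remains is to exhibit Young functions $B, C$ with $\bar B\in B_{p}$ and $\bar C\in B_{p'}$ satisfying the pointwise conditions \eqref{eqn:checkyoung}.

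Motivated by the heuristic that the $\Phi^{-1}(t)^{m}\approx \log(e+t)^{m}$ factor shifts the polylog exponent by $mp'$ (resp.\ $mp$), I propose
$$B(t)=t^{p'}\log(e+t)^{p'-1+\delta/2}, \qquad C(t)=t^{p}\log(e+t)^{p-1+\delta/2}.$$
Standard Orlicz duality (Section \ref{section:prelim}) together with the identity $(p-1)(p'-1)=1$ gives
$$\bar B(t)\approx t^{p}\log(e+t)^{-1-(p-1)\delta/2},$$
which lies in $B_{p}$ by the example following \eqref{Bp}; the analogous computation places $\bar C\in B_{p'}$. To verify the first condition in \eqref{eqn:checkyoung}, I invert and obtain
$$X^{-1}(t)\approx t^{1/p'}\log(e+t)^{-m-1/p-\delta/p'}, \qquad \frac{B^{-1}(t)}{\Phi^{-1}(t)^{m}}\approx t^{1/p'}\log(e+t)^{-m-1/p-\delta/(2p')}.$$
Comparing logarithmic exponents reduces the required bound to $\delta/p'\geq \delta/(2p')$, which is immediate for $\delta>0$; the second condition follows by the symmetric computation.

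With these choices all the hypotheses of Theorem \ref{thm:oscclassbump} are in force and the constant $\mathsf{K}$ produced matches the one in the corollary, so the theorem gives $\|T^{m}_{b}f\|_{L^{p}(u)}\lesssim \mathsf{K}\,\|b\|_{\mathsf{Osc}(\Phi)}^{m}\|f\|_{L^{p}(v)}\approx \mathsf{K}\,\|b\|_{BMO}^{m}\|f\|_{L^{p}(v)}$. Since Theorem \ref{thm:oscclassbump} carries all of the analytic content, there is no genuine obstacle; the only point to monitor is the bookkeeping in splitting the given $\delta$ between the polylog exponent of $B$ (enough to force $\bar B\in B_{p}$) and the slack needed for \eqref{eqn:checkyoung}.
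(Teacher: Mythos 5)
Your proposal is correct and follows the same route the paper takes: the authors explicitly note that Corollary \ref{thm:BMObump} is obtained by taking $\Phi(t)=\exp t-1$ in Theorem \ref{thm:oscclassbump}, which (reading off the proof of Corollary \ref{thm:epsbump} with $\ep=1$) is precisely your argument. The only cosmetic difference is that the paper's pattern of choices uses the full $\delta$ in $B$ and $C$ (giving $\Phi^{-1}(t)^m X^{-1}(t)\eqsim B^{-1}(t)$ exactly) while you use $\delta/2$ (giving a strict inequality), but both satisfy \eqref{eqn:checkyoung} and the $B_p$ conditions.
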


%\begin{corollary} \label{thm:BMObumpfrac} Suppose $m\geq 1$, $0<\al<n$, and $A,B$ are Young functions with $\bar{A}\in B_p$ and $\bar{B}\in B_{p'}$.  If $b\in %BMO$ and $(u,v)$ satisfy
%\begin{multline*}
%\mathsf{K}=\sup_Q|Q|^{\frac{\al}{n}}\|u^{\frac1p}\|_{L^p(\log L)^{(m+1)p-1+\delta},Q}\|v^{-\frac{1}{p}}\|_{B,Q} \\+\sup_Q|Q|^{\frac{\al}{n}}\|u^{\frac1p}\|_{A,Q}\|v^{-%\frac{1}{p}}\|_{L^{p'}(\log L)^{(m+1)p'-1+\delta},Q}<\infty\end{multline*}
%for some $\delta>0$ then 
% $$\|(I_\al)^m_bf\|_{L^p(u)}\lesssim \mathsf{K} \|b\|_{BMO}^m\|f\|_{L^p(v)}.$$
%\end{corollary}

%for CZOs, while the results for $I_\al$ seem to be new (see \cite{MR2918187} and \cite{MR2250646} for previous results).  However, the flexibility of Theorem \ref{thm:oscclassbump}  allows us to work with subclasses of $BMO$.  Let $r>0$ and consider the class of functions $\mathsf{Osc}(\exp L^r)$ defined by functions that satisfy

  However, the real power of Theorem \ref{thm:oscclassbump} is that it
  allows us to work with a scale of subclasses of $BMO$ to obtain
  better bump conditions.  Define $\mathsf{Osc}(L^\infty)$ as above
  but using the $L^\infty$ norm, i.e., $\mathsf{Osc}(L^\infty)$ is the space of functions such that $$\sup_Q\|b-b_Q\|_{L^\infty(Q)}<\infty.$$  In fact, we have that
  $\mathsf{Osc}(L^\infty)=L^\infty$.  Clearly, 
  $L^\infty\subseteq \mathsf{Osc}(L^\infty)$.  Conversely, if
  $b\notin L^\infty$, then there exists $Q$ with
  $\|b\|_{\infty,Q}=\infty$.   But  $b_Q$ is a finite constant and so
  $\|b-b_Q\|_{\infty,Q}=\infty$; hence $b\notin \mathsf{Osc}(L^\infty)$.

  Further, we have that
  $$L^\infty=\mathsf{Osc}(L^\infty)\subseteq \mathsf{Osc}(\exp L)=BMO.$$
We will consider Orlicz spaces $L^\Phi$ such that
$$L^\infty \subsetneq \mathsf{Osc}(\Phi)\subsetneq BMO.$$
One such class of functions is $\mathsf{Osc}(\exp L^r)$ for $r>1$,
which corresponds to $\mathsf{Osc}(\Phi)$ where
$\Phi(t)=\exp(t^r)-1$. An alternative definition of
$\mathsf{Osc}(\exp L^r)$ is all  $b\in L^1_{\textsf{loc}}(\R^n)$ that satisfy
$$\dashint_Q \exp(c|b(x)-b_Q|^r)\,dx\leq C$$
for some $c,C>0$ and all cubes $Q$. Placing $b$ in these better
oscillation classes will improve the size of the bump conditions on
the weights.

\begin{corollary} \label{thm:epsbump} Suppose that $1<p<\infty$, $T$ is a CZO, $m\in \N $, and $A,D$ are Young functions with $\bar{A}\in B_{p'}$ and $\bar{D}\in B_{p}$.  If $b\in \mathsf{Osc}(\exp L^\frac1\ep)$ for some $\ep>0$ and $(u,v)$ satisfy
\begin{multline*}\mathsf{K}=\sup_Q\|u^{\frac1p}\|_{A,Q}\|v^{-\frac{1}{p}}\|_{L^{p'}(\log L)^{(\ep m+1)p'-1+\delta},Q}\\+\sup_Q\|u^{\frac1p}\|_{L^p(\log L)^{(\ep m+1)p-1+\delta},Q}\|v^{-\frac{1}{p}}\|_{D,Q}<\infty\end{multline*}
for some $\delta>0$, then 
 $$\|T^m_bf\|_{L^p(u)}\lesssim \mathsf{K} \|b\|_{\mathsf{Osc}(\exp L^\frac1\ep)}^m\|f\|_{L^p(v)}.$$
\end{corollary}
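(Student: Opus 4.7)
The plan is to reduce Corollary \ref{thm:epsbump} directly to Theorem \ref{thm:oscclassbump} by choosing the right Young functions. The hypothesis $b\in\mathsf{Osc}(\exp L^{1/\ep})$ corresponds to the choice $\Phi(t)=\exp(t^{1/\ep})-1$ in Theorem \ref{thm:oscclassbump}, for which $\Phi^{-1}(t)\approx (\log(e+t))^{\ep}$ for large $t$, so $\Phi^{-1}(t)^m \approx (\log(e+t))^{\ep m}$. The goal is then to identify Young functions $B$ and $C$ with $\bar{B}\in B_p$, $\bar{C}\in B_{p'}$ such that condition \eqref{eqn:checkyoung} produces the $X$ and $Y$ appearing in the hypothesis of the corollary, namely $X(t)=t^{p'}(\log(e+t))^{(\ep m+1)p'-1+\delta}$ and $Y(t)=t^p(\log(e+t))^{(\ep m+1)p-1+\delta}$. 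Leave $A$ and $D$ unchanged.

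First I would compute the relevant Orlicz inverses. For any $a\in\R$, if $E(t)=t^{p'}(\log(e+t))^a$ then for large $t$ one has $E^{-1}(t)\approx t^{1/p'}(\log(e+t))^{-a/p'}$. Applying this with $a=(\ep m+1)p'-1+\delta$ gives $X^{-1}(t)\approx t^{1/p'}(\log(e+t))^{-\ep m-1/p-\delta/p'}$. To satisfy \eqref{eqn:checkyoung} it suffices to pick $B$ so that $B^{-1}(t)\gtrsim X^{-1}(t)\Phi^{-1}(t)^m \approx t^{1/p'}(\log(e+t))^{-1/p-\delta/p'}$, and the natural choice is
\begin{equation*}
  B(t)=t^{p'}(\log(e+t))^{p'-1+\delta},
\end{equation*}
which indeed gives $B^{-1}(t)\approx t^{1/p'}(\log(e+t))^{-(p'-1+\delta)/p'}=t^{1/p'}(\log(e+t))^{-1/p-\delta/p'}$. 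The symmetric computation with $p$ and $p'$ interchanged produces the companion choice $C(t)=t^p(\log(e+t))^{p-1+\delta}$ for $Y$.

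Next I would verify the required $B_p$ conditions. A standard calculation (already invoked in the discussion after \eqref{bumpCZO}) gives $\bar{B}(t)\approx t^p(\log(e+t))^{-(p'-1+\delta)(p-1)}=t^p(\log(e+t))^{-1-\delta(p-1)}$, so $\int_1^\infty \bar{B}(t)\,t^{-p-1}\,dt\approx \int_1^\infty t^{-1}(\log(e+t))^{-1-\delta(p-1)}\,dt<\infty$ since $\delta(p-1)>0$, confirming $\bar{B}\in B_p$. The analogous argument shows $\bar{C}\in B_{p'}$.

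With these choices in hand, condition \eqref{eqn:checkyoung} holds by construction, the hypotheses on $A,D$ are carried over unchanged, and the bump condition on $(u,v)$ in the corollary is exactly the bump condition in Theorem \ref{thm:oscclassbump}. Applying that theorem yields $\|T^m_bf\|_{L^p(u)}\lesssim \mathsf{K}\,\|b\|_{\mathsf{Osc}(\Phi)}^m\|f\|_{L^p(v)}$, which is the desired conclusion since $\mathsf{Osc}(\Phi)=\mathsf{Osc}(\exp L^{1/\ep})$. The only delicate part is keeping track of the powers of the logarithm when inverting the Young functions, which is why the combination $(\ep m+1)$ appears in the exponent: $\ep m$ comes from $\Phi^{-1}(t)^m$ and the extra $+1$ together with $\delta$ encodes the sharp $B_p$ condition for $B$ and $C$.
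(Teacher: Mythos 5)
Your proof is correct and follows essentially the same path as the paper: reduce to Theorem \ref{thm:oscclassbump} with $\Phi(t)=\exp(t^{1/\ep})-1$, take $B(t)=t^{p'}\log(e+t)^{p'-1+\delta}$ and $C(t)=t^{p}\log(e+t)^{p-1+\delta}$, and verify $\Phi^{-1}(t)^m X^{-1}(t)\lesssim B^{-1}(t)$ (and its companion) together with $\bar{B}\in B_p$, $\bar{C}\in B_{p'}$ using the standard asymptotics for log-bumped Young functions. The bookkeeping of logarithmic exponents matches the paper's computation exactly.
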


We note in passing that the requirement that $r>1$ is necessary to get
something new.  If $r<1$ and we let $\Phi(t)=\exp(t^r)-1$, then $t
\lesssim \Phi(t) \lesssim e^t-1$, and so $\mathsf{Osc}(\Phi) = BMO$.

\medskip

For $a>0$ the class $\mathsf{Osc}(\exp(L^a))$ is related to 
$$\sqrt[a]{BMO}=\{b\in L^1_{\textsf{loc}}(\R^n): b\geq 0 \ \text{and} \ b^a\in BMO\}.$$
An example of $b\in \sqrt[a]{BMO}$ is given by $b(x)=|\log
x|^{1/a}$. The space $\sqrt[a]{BMO}$ was introduced by Johnson and
Neugebauer in \cite{MR1239426}, but does not seem to have been studied
in depth.  The following theorem is sketched in
\cite{MR1239426}, but  for completeness we give a full proof.

\begin{theorem} \label{thm:bkinBMO} Given $a>1$, $\sqrt[a]{BMO}\subseteq \mathsf{Osc}(\exp(L^a))$, and 
$$\|b\|_{\mathsf{Osc}(\exp(L^a))}\lesssim \|b^a\|_{BMO}^{1/a}.$$
\end{theorem}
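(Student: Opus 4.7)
The plan is to exploit the elementary pointwise inequality
$$|x-y|^{a} \leq |x^{a}-y^{a}| \qquad (x,y \geq 0,\ a \geq 1),$$
which I would verify by assuming $x \geq y$ and noting that the map $x \mapsto x^{a}-(x-y)^{a}$ takes the value $y^{a}$ at $x=y$ and has nonnegative derivative on $[y,\infty)$. This is the only nontrivial analytic ingredient.

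Fix a cube $Q$. The natural centering constant to use with this pointwise estimate is not $b_{Q}$ but rather the $L^{a}$-average $\beta_{Q}=((b^{a})_{Q})^{1/a}$, which is well-defined and nonnegative because $b\geq 0$. Setting $x=b(z)$ and $y=\beta_{Q}$ gives
$$|b(z)-\beta_{Q}|^{a} \leq |b(z)^{a}-(b^{a})_{Q}|.$$
The John-Nirenberg inequality \eqref{eqn:JN} applied to $b^{a}\in BMO$ supplies $\|b^{a}-(b^{a})_{Q}\|_{\exp L,Q}\lesssim \|b^{a}\|_{BMO}$. Taking $\lambda^{a}$ to be a suitable multiple of $\|b^{a}\|_{BMO}$ then yields
$$\dashint_{Q}\exp\!\left(\frac{|b(z)-\beta_{Q}|^{a}}{\lambda^{a}}\right)dz \leq \dashint_{Q}\exp\!\left(\frac{|b^{a}(z)-(b^{a})_{Q}|}{\lambda^{a}}\right)dz \leq 2,$$
so $\|b-\beta_{Q}\|_{\exp(L^{a}),Q}\lesssim \|b^{a}\|_{BMO}^{1/a}$.

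The remaining step is to swap the centering $\beta_{Q}$ for $b_{Q}$. By the triangle inequality for the Orlicz seminorm, together with the fact that $\|1\|_{\exp(L^{a}),Q}$ is a finite constant depending only on $a$, this reduces to bounding the scalar correction $|\beta_{Q}-b_{Q}|$. Using that $b_{Q}$ is, up to a factor of $2$, the best constant $L^{1}(Q,dz/|Q|)$-approximant to $b$, and then applying H\"older's inequality together with the pointwise inequality above,
$$|\beta_{Q}-b_{Q}| \leq \dashint_{Q}|b-\beta_{Q}|\,dz + \dashint_{Q}|b-b_{Q}|\,dz \leq 3\dashint_{Q}|b-\beta_{Q}|\,dz \leq 3\|b^{a}\|_{BMO}^{1/a}.$$
Taking a supremum over $Q$ yields the claimed bound.

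The main obstacle is really the centering mismatch: the pointwise inequality naturally produces the constant $\beta_{Q}$, whereas $\|\cdot\|_{\mathsf{Osc}(\exp(L^{a}))}$ is defined through the arithmetic mean $b_{Q}$, so one has to estimate separately the gap between $b_{Q}$ and the $L^{a}$-mean of $b$ on $Q$. Everything else is John-Nirenberg and a couple of routine Orlicz manipulations.
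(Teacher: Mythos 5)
Your proof is correct and takes essentially the same route as the paper. The analytic core is identical: your pointwise inequality $|x-y|^{a}\leq|x^{a}-y^{a}|$ for $x,y\geq0$, $a\geq1$ is exactly the Hölder continuity $|F(X)-F(Y)|\leq|X-Y|^{1/a}$ of $F(X)=X^{1/a}$ that the paper uses (set $X=x^{a}$, $Y=y^{a}$), and both proofs apply John--Nirenberg to $b^{a}$ and then must account for the gap between $b_{Q}$ and $\beta_{Q}=((b^{a})_{Q})^{1/a}=F(u_{Q})$. The only difference is organizational: the paper inserts the centering correction directly into the exponential integral via $|b-b_{Q}|\leq|b-F(u_{Q})|+|F(u_{Q})-b_{Q}|$ and estimates $\exp\bigl(c|F(u_{Q})-b_{Q}|^{a}\bigr)$ by Jensen's inequality, whereas you first bound $\|b-\beta_{Q}\|_{\exp(L^{a}),Q}$ cleanly, then swap the centering constant via the Orlicz-norm triangle inequality and the scalar estimate on $|\beta_{Q}-b_{Q}|$. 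Your scalar estimate is valid but slightly indirect: the factor $3$ and the ``best $L^{1}$-approximant'' argument are unnecessary, since $\beta_{Q}-b_{Q}=\dashint_{Q}(\beta_{Q}-b)$ gives $|\beta_{Q}-b_{Q}|\leq\dashint_{Q}|b-\beta_{Q}|$ directly. Either way the bound $\|b\|_{\mathsf{Osc}(\exp(L^{a}))}\lesssim\|b^{a}\|_{BMO}^{1/a}$ follows.
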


The reverse inequality is true for smaller powers: more precisely, if
$0<a\leq 1$ ,then
$$b\in \mathsf{Osc}(\exp(L^a)) \Rightarrow b\in BMO \Rightarrow |b|^a\in BMO.$$
It is not clear whether there is a converse to Theorem \ref{thm:bkinBMO}.

%An interesting class of functions such that $b^k\in BMO$ for all $k\in \N$:
%$$\bigcap_{k=1}^\infty \sqrt[k]{BMO}.$$
%The function $b(x)=\log(\log(1/|x|))$ for $|x|\leq e^{-1}$ belongs to $\bigcap_{k=1}^\infty \sqrt[k]{BMO}$.  If we assume $b^k\in BMO$ for all $k\in \N$ then we obtain two weight inequalities that assuming optimal bumps.

If we assume $b\in \sqrt[a]{BMO}$ for large $a$, for example if
$b\in\bigcap_{k=1}^\infty \sqrt[k]{BMO}$, then we obtain the same
sufficient bump conditions for commutators that hold for CZOs themselves.

\begin{corollary} \label{thm:optimalbump} Suppose that $1<p<\infty$, $m\geq 1$, \ and $A,D$ are Young functions with $\bar{A}\in B_{p'}$ and $\bar{D}\in B_{p}$. Suppose further that $(u,v)$ satisfy
\begin{multline}\label{eqn:seplogbump}\mathsf{K}=\sup_Q\|u^{\frac1p}\|_{A,Q}\|v^{-\frac{1}{p}}\|_{L^{p'}(\log L)^{p'-1+\delta},Q}\\+\sup_Q\|u^{\frac1p}\|_{L^p(\log L)^{p-1+\delta},Q}\|v^{-\frac{1}{p}}\|_{D,Q}<\infty\end{multline}
for some $\delta>0$. If $b\in \sqrt[a]{BMO}$ for $a>\max\{p,p'\}\frac{m}{\delta},$  then 
 $$\|T^m_bf\|_{L^p(u)}\lesssim \mathsf{K}\|b^a\|^{\frac{m}{a}}_{BMO}\|f\|_{L^p(v)}.$$
\end{corollary}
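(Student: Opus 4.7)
The plan is to deduce Corollary \ref{thm:optimalbump} from Corollary \ref{thm:epsbump} by choosing the parameter $\ep=1/a$ and invoking Theorem \ref{thm:bkinBMO} to land $b$ in the right oscillation class. The quantitative hypothesis $a>\max\{p,p'\}m/\delta$ will be used in exactly one place: to provide enough margin in the log exponent so that Corollary \ref{thm:epsbump}'s bump conditions are implied by \eqref{eqn:seplogbump}.

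First I would set $\ep=1/a$ and rewrite the hypothesis on $a$ in the equivalent form $\ep m\max\{p,p'\}<\delta$, so that
\[
\delta':=\delta-\ep m\max\{p,p'\}
\]
is strictly positive. With this choice one has, for all cubes $Q$,
\[
(\ep m+1)p-1+\delta'\leq p-1+\delta\quad\text{and}\quad(\ep m+1)p'-1+\delta'\leq p'-1+\delta.
\]
Since the Orlicz norm $\|\cdot\|_{L^q(\log L)^s,Q}$ is monotone nondecreasing in $s$, this means the log bumps appearing in the hypothesis of Corollary \ref{thm:epsbump} (applied with parameters $\ep$ and $\delta'$) are pointwise dominated by the corresponding log bumps in \eqref{eqn:seplogbump}. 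Consequently the constant $\mathsf{K}$ required by Corollary \ref{thm:epsbump} is controlled by the $\mathsf{K}$ assumed finite in the present statement.

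Next, since $a=1/\ep>1$, Theorem \ref{thm:bkinBMO} gives $\sqrt[a]{BMO}\subseteq \mathsf{Osc}(\exp L^{1/\ep})$ with $\|b\|_{\mathsf{Osc}(\exp L^{1/\ep})}\lesssim\|b^a\|_{BMO}^{1/a}$, so $b$ lies in the oscillation class required to apply Corollary \ref{thm:epsbump}. Combining that corollary with the oscillation-norm bound from Theorem \ref{thm:bkinBMO} yields
\[
\|T^m_b f\|_{L^p(u)}\lesssim \mathsf{K}\,\|b\|_{\mathsf{Osc}(\exp L^{1/\ep})}^m\|f\|_{L^p(v)}\lesssim \mathsf{K}\,\|b^a\|_{BMO}^{m/a}\|f\|_{L^p(v)},
\]
which is precisely the desired inequality. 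There is no real obstacle in the argument: it is a bookkeeping reduction in which the quantitative hypothesis on $a$ enters only to guarantee $\delta'>0$, and all that must be observed is the monotonicity of the $L^q(\log L)^s$ norms in $s$ together with the embedding given by Theorem \ref{thm:bkinBMO}.
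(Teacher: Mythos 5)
Your argument is correct and is a modest shortcut of the paper's. The paper introduces an intermediate result, Theorem~\ref{thm:optimalbumpexpclass}, and proves it from Theorem~\ref{thm:oscclassbump} by choosing Young functions $B,C$ with log exponents $p'-1+\alpha$ and $p-1+\beta$, where $\alpha=\delta-\ep m p'$ and $\beta=\delta-\ep m p$, so that the inverse-function requirements $\Phi^{-1}(t)^m X^{-1}(t)\eqsim B^{-1}(t)$ and $\Phi^{-1}(t)^m Y^{-1}(t)\eqsim C^{-1}(t)$ hold with equality. You instead apply Corollary~\ref{thm:epsbump} directly with $\ep=1/a$ and a single shifted parameter $\delta'=\delta-\ep m\max\{p,p'\}=\min\{\alpha,\beta\}>0$, and then use monotonicity of the Orlicz norm $\|\cdot\|_{L^q(\log L)^s,Q}$ in $s$ to pass from the exponents $(\ep m+1)p-1+\delta'\le p-1+\delta$ and $(\ep m+1)p'-1+\delta'\le p'-1+\delta$ to those in~\eqref{eqn:seplogbump}. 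The underlying mechanism---absorbing the excess $m\ep$ of the log exponent into the $\delta$ margin---is the same, and both routes invoke Theorem~\ref{thm:bkinBMO} for the embedding $\sqrt[a]{BMO}\subseteq\mathsf{Osc}(\exp L^a)$ and the bound $\|b\|_{\mathsf{Osc}(\exp L^a)}\lesssim\|b^a\|_{BMO}^{1/a}$. What your route buys is brevity, since you avoid stating and proving the auxiliary Theorem~\ref{thm:optimalbumpexpclass}; what the paper's route buys is that intermediate theorem itself (of some independent interest), obtained by keeping the two shifts $\alpha,\beta$ separately optimal rather than collapsing to their minimum, though the monotonicity step makes the collapse harmless for the corollary.
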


If we take $A(t)=t^p\log(e+t)^{p-1+\delta}$ and
$D(t)=t^{p'}\log(e+t)^{p'-1+\delta}$ (so that $\bar{A}\in B_{p'}$ and
$\bar{B}\in B_p$), then condition \eqref{eqn:seplogbump} becomes
$$\|u^{\frac1p}\|_{L^p(\log L)^{p-1+\delta},Q}\|v^{-\frac{1}{p}}\|_{L^{p'}(\log L)^{p'-1+\delta},Q}<\infty,$$
which is the optimal logarithmic bump condition that is sufficient for
$T$ itself to satisfy $T: L^p(v) \rightarrow L^p(u)$.   This result is
striking given the fact, noted above, that commutators in general are
more singular and so require stronger weight conditions.

We also consider the $b$ in the oscillation class
$\mathsf{Osc}(\exp(\exp(L^r))$ associated with the Young function
$\Phi(t)=\exp(\exp (t^r))-e$.  In particular,
$b\in \mathsf{Osc}(\exp(\exp(L^r))$ if and only if there exist
constants $c,C$ such that
$$\dashint_Q \exp\big(\exp(c|b(x)-b_Q|^r)\big)\,dx\leq C.$$
Since $\exp(t^r)\lesssim \exp(\exp t)$ for large $t$ we have that
$\mathsf{Osc}(\exp(\exp L))\subseteq \bigcap_{r>1} \mathsf{Osc}(\exp
L^r)$ and hence $T^m_b$ will satisfy two weight bounds when
$b\in\mathsf{Osc}\big(\exp(\exp L)\big)$ and the pair of weights
satisfies \eqref{eqn:seplogbump}.  However, we can prove a stronger
condition in the scale of the so-called log-log bumps.

\begin{corollary} \label{cor:doubleexp} Suppose that $1<p<\infty$, $m\geq 1$, and that $A,D$ are Young functions with $\bar{A}\in B_{p'}$ and $\bar{D}\in B_{p}$. Suppose $b\in \mathsf{Osc}(\exp(\exp(L^{\frac1\ep}))$ for some $\ep>0$. If the pair of weights $(u,v)$ satisfy
\begin{multline}\label{eqn:loglogbumpcond}\sup_Q\|u^{\frac1p}\|_{A,Q}\|v^{-\frac1{p}}\|_{L^{p'}(\log L)^{{p'}-1}(\log\log L)^{(1+m\ep)p'-1+\delta}(Q)}\\
+\sup_Q\|u^{\frac1p}\|_{L^p(\log L)^{p-1}(\log\log L)^{(1+m\ep)p-1+\delta}(Q)}\|v^{-\frac1{p}}\|_{D,Q}<\infty,\end{multline}
then 
\begin{equation*}\|T_b^mf\|_{L^p(u)}\leq C\|b\|^m_{\mathsf{Osc}(\exp(\exp L^{\frac1\ep}))}\|f\|_{L^p(v)}.\end{equation*}
\end{corollary}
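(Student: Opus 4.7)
The plan is to deduce the corollary from Theorem~\ref{thm:oscclassbump} by taking
$$\Phi(t)=\exp(\exp(t^{1/\ep}))-e,$$
so that by hypothesis $b\in \mathsf{Osc}(\Phi)$. Writing $X$ and $Y$ for the two log-log Young functions appearing on the $v^{-1/p}$ and $u^{1/p}$ sides of \eqref{eqn:loglogbumpcond}, I need to produce Young functions $B$ with $\bar B\in B_p$ and $C$ with $\bar C\in B_{p'}$ satisfying the growth conditions \eqref{eqn:checkyoung}. The natural guess is to take the standard log-log CZO bumps
$$B(t)=t^{p'}(\log(e+t))^{p'-1}(\log\log(e^e+t))^{p'-1+\delta},$$
$$C(t)=t^{p}(\log(e+t))^{p-1}(\log\log(e^e+t))^{p-1+\delta},$$
that is, the functions obtained from $X$ and $Y$ by removing the factor $(1+m\ep)$ from the log-log exponents.

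The first concrete step is to compute $\Phi^{-1}$; inverting the double exponential gives $\Phi^{-1}(t)\approx (\log\log t)^{\ep}$ for large $t$. Next I would compute the inverses of the log-log Young functions using the standard rule that if $F(t)\approx t^q(\log t)^{a}(\log\log t)^{b}$, then $F^{-1}(t)\approx t^{1/q}(\log t)^{-a/q}(\log\log t)^{-b/q}$ for large $t$. A direct comparison then reduces the first half of \eqref{eqn:checkyoung} to the identity
$$\frac{(1+m\ep)p'-1+\delta}{p'}=\frac{p'-1+\delta}{p'}+m\ep,$$
which is immediate, and the second half reduces similarly to the analogous identity with $p$ in place of $p'$. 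In this way the factor $(1+m\ep)$ in the log-log exponent of $X$ and $Y$ is exactly what is needed to absorb the $\Phi^{-1}(t)^m\approx (\log\log t)^{m\ep}$ coming from the oscillation class.

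It remains to check that the log-log bumps $B$ and $C$ satisfy $\bar B\in B_p$ and $\bar C\in B_{p'}$. This is a standard Orlicz-space calculation: the complementary Young function of $B$ behaves like $\bar B(t)\approx t^p(\log t)^{-1}(\log\log t)^{-1-\delta(p-1)}$ for large $t$, and the integral \eqref{Bp} converges by the change of variables $u=\log\log t$ thanks to the strictly positive extra exponent $\delta(p-1)$; the verification for $\bar C$ is identical with the roles of $p$ and $p'$ swapped. This bookkeeping of the associate functions of log-log Young functions is the main (though routine) obstacle; once it is settled, Theorem~\ref{thm:oscclassbump} applies and delivers the asserted inequality with constant $\mathsf{K}\|b\|^m_{\mathsf{Osc}(\exp(\exp L^{1/\ep}))}$.
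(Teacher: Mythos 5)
Your proposal is correct and follows essentially the same route as the paper: both reduce to Theorem~\ref{thm:oscclassbump} with $\Phi(t)=\exp(\exp(t^{1/\ep}))-e$, take the standard log-log CZO bumps for $B$ and $C$, and verify the hypotheses \eqref{eqn:checkyoung} via the inversion asymptotics $\Phi^{-1}(t)\eqsim(\log\log t)^\ep$ and the log-log inverse formula \eqref{eqn:logloginv}. The only superficial difference is that you explicitly spell out the membership $\bar B\in B_p$, $\bar C\in B_{p'}$ (a calculation the paper leaves implicit in its sketch, relying on \eqref{eqn:loglogassoc}), which is a small but welcome addition.
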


We note in passing that one can continue to restrict the class of
$BMO$ and get better bump conditions on the weights, passing to
``log-log-log bumps'', etc.  We leave the details to the interested
reader.

\medskip

Finally, we examine some necessary conditions for the iterated
commutators to be bounded.  In Theorem \ref{thm:main} the unbumped
condition for a Calder\'on-Zygmund operator is obtained by taking
$A(t)=C(t)=t^p$ and $B(t)=D(t)=t^{p'}$.  With these Young functions,  condition
\eqref{twoweightBMO} becomes
\begin{equation}\label{unbump}\sup_Q \left(\dashint_Q |b-b_Q|^{mp}u\right)^{\frac1p}\left(\dashint_Q v^{-\frac{p'}{p}}\right)^{\frac{1}{p'}}+\sup_Q \left(\dashint_Q u\right)^{\frac1p}\left(\dashint_Q|b-b_Q|^{mp'} v^{-\frac{p'}{p}}\right)^{\frac{1}{p'}}<\infty.\end{equation}
This condition turns out to be necessary for the sparse operators that
dominate the commutators (see Theorem~\ref{thm:mainsparse} below).
However, for the iterated commutators themselves we can only prove
that a weaker condition
is necessary.  Recall that the Hilbert transform is given by
$$Hf(x)=p.v.\int_{\R}\frac{f(y)}{x-y}\,dy.$$
For first order commutators of the Hilbert transform, or higher order
commutators of even orders, we obtain the following  necessary
conditions.

\begin{theorem}\label{necHilbert} Suppose $1<p<\infty$, $b\in L^1_{\mathsf{loc}}(\R^n)$, $(u,v)$ is a pair of weights, and $H$ is the Hilbert transform on $\R$. If
$$\left(\int_\R|[b,H]f|^pu\right)^{\frac1p}\leq \mathsf{C}\left(\int_\R|f|^pv\right)^{\frac1p},$$
then 
\begin{equation}\label{neccond}\sup_I \left(\frac{1}{v(I)}\int_I|b-b_I|^{p}u\right)^{\frac1p},\sup_I \left(\frac{1}{u^{-\frac{p'}{p}}(I)}\int_I|b-b_I|^{p'}v^{-\frac{p'}{p}}\right)^{\frac1{p'}}\leq \mathsf{C}.\end{equation}
\end{theorem}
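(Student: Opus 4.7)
Since $H^{*}=-H$, a short computation gives $([b,H])^{*}=H^{*}b-bH^{*}=[b,H]$, so $[b,H]$ is self-adjoint. By the standard duality between $L^{p}(u)$ and $L^{p'}(u^{1-p'})$, the hypothesis is equivalent to $\|[b,H]\|_{L^{p'}(u^{1-p'})\to L^{p'}(v^{1-p'})}\le\mathsf{C}$. Applying the first inequality of \eqref{neccond} in this dual form, with $(u,v,p)$ replaced by $(v^{1-p'},u^{1-p'},p')$ and noting that $v^{-p'/p}=v^{1-p'}$ and $u^{-p'/p}=u^{1-p'}$, reproduces the second inequality. It therefore suffices to prove the first, namely $\int_{I}|b-b_{I}|^{p}u\le\mathsf{C}^{p}v(I)$ for every interval $I$.

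To this end, fix $I=[a,a+\ell]$ and set $I':=[a+2\ell,a+3\ell]$, a disjoint shift. For $x\in I$ one has $\tfrac{1}{x-y}\in[-\tfrac{1}{\ell},-\tfrac{1}{3\ell}]$ whenever $y\in I'$, and a direct computation shows
\[
H\mathbf{1}_{I'}(x)=\log\frac{|x-(a+3\ell)|}{|x-(a+2\ell)|}\in [-\log 2,\,-\log(3/2)],
\]
so $H\mathbf{1}_{I'}$ has constant sign and magnitude comparable to $1$ on $I$. The pointwise identity
\[
[b,H]\mathbf{1}_{I'}(x)=(b(x)-b_{I})H\mathbf{1}_{I'}(x)-H\bigl((b-b_{I})\mathbf{1}_{I'}\bigr)(x)
\]
isolates $b(x)-b_{I}$. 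Set $E_{I}:=\int_{I}|b-b_{I}|^{p}u$ and pair with $g:=-\sgn(b-b_{I})|b-b_{I}|^{p-1}u\,\mathbf{1}_{I}$, so that a short calculation gives $\|g\|_{L^{p'}(u^{1-p'})}=E_{I}^{1/p'}$. The main piece becomes $-\int g\,(b-b_{I})H\mathbf{1}_{I'}=\int_{I}|b-b_{I}|^{p}u\,|H\mathbf{1}_{I'}|\ge\log(3/2)\,E_{I}$; the commutator contribution is controlled by the operator-norm hypothesis and H\"older to give $\bigl|\int g\,[b,H]\mathbf{1}_{I'}\bigr|\le\mathsf{C}\,v(I')^{1/p}E_{I}^{1/p'}$; and the remainder is bounded by $\tfrac{1}{\ell}\int_{I'}|b-b_{I}|$ pointwise on $I$, which combined with $\int_{I}|g|\le E_{I}^{1/p'}u(I)^{1/p}$ and division by $E_{I}^{1/p'}$ yields
\[
E_{I}^{1/p}\;\lesssim\;\mathsf{C}\,v(I')^{1/p}\;+\;\frac{u(I)^{1/p}}{\ell}\int_{I'}|b-b_{I}|.
\]

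The hard part is absorbing the error term $\frac{u(I)^{1/p}}{\ell}\int_{I'}|b-b_{I}|$, which involves the oscillation of $b$ on the neighbouring interval $I'$ rather than on $I$ itself. The plan is to average the inequality above over shifts $I_{t}:=I+t$ for $t\in[\ell,2\ell]$---each yielding the analogous bound with $I'$ replaced by $I_{t}$---so that $\int_{I_{t}}|b-b_{I}|$ is converted into a smoothed average over a bounded enlargement of $I$; alternatively one iterates along a nested chain $I\subset 2I\subset 4I\subset\cdots$ and telescopes the resulting averages $b_{2^{k}I}$, using the self-adjointness from the first paragraph to close the estimate with the exact constant $\mathsf{C}$. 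This absorption/averaging step, which must be carried out without any doubling or $A_{p}$ assumption on $(u,v)$, is the most delicate portion of the argument.
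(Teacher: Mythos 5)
Your opening paragraph (reducing the second inequality to the first by the self-adjointness $[b,H]^*=[b,H]$ and $L^p(u)$--$L^{p'}(u^{1-p'})$ duality) is correct and agrees with the paper. After that the argument diverges, and you have honestly flagged where the gap is --- but it is a genuine gap, not a technicality that an averaging or telescoping trick will close.

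Testing against the disjoint translate $\mathbf{1}_{I'}$ forces two quantities into the estimate that the two-weight hypothesis simply does not control: $v(I')$ in place of $v(I)$, and the error term $\tfrac{u(I)^{1/p}}{\ell}\int_{I'}|b-b_I|$. There is no relation at all between $v(I)$ and $v(I')$, nor between $u$ and $v$, under the sole assumption $\|[b,H]\|_{L^p(v)\to L^p(u)}\le\mathsf{C}$; and since $b\in BMO$ is precisely what is to be \emph{derived} (in the one-weight corollary), one cannot bound $\int_{I'}|b-b_I|$ a priori. Averaging over shifts $I_t$ only smears the error over a neighbourhood of $I$ without removing either obstruction, and the telescoping chain $I\subset 2I\subset\cdots$ needs doubling of the weights to be useful. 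Without $u,v\in A_p$ or some doubling hypothesis, the absorption step you defer cannot be carried out, so the proof as proposed does not go through.

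The paper sidesteps the neighbouring-interval issue entirely with an algebraic device (following Hyt\"onen's argument for the Beurling transform). Writing $c$ for the centre of $I$, one has
\[
\int_I|b-b_I|^p u
=\int_I\dashint_I\frac{b(x)-b(y)}{x-y}\,\bigl((x-c)+(c-y)\bigr)\,f(x)u(x)\,dy\,dx,
\]
with $f=\sgn(b-b_I)|b-b_I|^{p-1}\mathbf{1}_I$. Splitting $(x-c)+(c-y)$ gives
\[
\int_I|b-b_I|^p u
=\int_\R [b,H](\mathbf{1}_I)\,g_c\,f\,u
-\int_\R [b,H](g_c)\,f\,u,
\qquad g_c(y):=\frac{y-c}{|I|}\mathbf{1}_I(y),
\]
and the crucial point is that both $\mathbf{1}_I$ and $g_c$ are supported on $I$ itself, with $\|g_c\|_\infty\le\tfrac12$. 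Each term is then bounded by $\tfrac{\mathsf{C}}{2}\,v(I)^{1/p}\bigl(\int_I|b-b_I|^p u\bigr)^{1/p'}$ via H\"older and the norm hypothesis, and dividing gives the claim with the interval $I$ only. It is this decomposition $x-y=(x-c)+(c-y)$, rather than testing on a disjoint translate, that you are missing; with it there is no error term to absorb and no mismatch between $v(I)$ and $v(I')$.
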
 

Our techniques extend to even order iterated commutators when $b$ is real.  

\begin{theorem}\label{necHilbertiterated} Suppose $1<p<\infty$, $m\in 2\N$, $b$ is a real valued locally integrable function, $(u,v)$ is a pair of weights, and $H$ is Hilbert transform on $\R$. If
$$\left(\int_\R|H^m_bf|^pu\right)^{\frac1p}\leq \mathsf{C}\left(\int_\R|f|^pv\right)^{\frac1p}$$
then 
\begin{equation}\label{necconditer}\sup_I \left(\frac{1}{v(I)}\int_I|b-b_I|^{mp}u\right)^{\frac1p},\sup_I \left(\frac{1}{u^{-\frac{p'}{p}}(I)}\int_I|b-b_I|^{mp'}v^{-\frac{p'}{p}}\right)^{\frac1{p'}}\leq \mathsf{C}.\end{equation}
\end{theorem}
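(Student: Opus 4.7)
The plan is to adapt the test--function strategy of Theorem~\ref{necHilbert} to even iterated commutators, using crucially that $(b(x)-b(y))^m\ge 0$ whenever $m$ is even and $b$ is real.

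\medskip

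\noindent\emph{Step 1 (duality reduction).} Since $b$ is real, $H^*=-H$, and $m$ is even, the kernel of the adjoint $(H^m_b)^*$ is $(-1)^{m+1}(b(x)-b(y))^m/(x-y)=-(b(x)-b(y))^m/(x-y)$, so $(H^m_b)^*=-H^m_b$. Consequently the hypothesis $\|H^m_b\|_{L^p(v)\to L^p(u)}\le \mathsf{C}$ is equivalent to $\|H^m_b\|_{L^{p'}(u^{1-p'})\to L^{p'}(v^{1-p'})}\le\mathsf{C}$, and this duality exactly swaps the two conditions in \eqref{necconditer}. It therefore suffices to prove the first one.

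\medskip

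\noindent\emph{Step 2 (test function and geometry).} Fix an interval $I$ and let $\tilde I$ be a translate with $|\tilde I|=|I|$ and $\mathrm{dist}(I,\tilde I)\asymp|I|$, say $\tilde I=I+2|I|$. For $x\in\tilde I$, $y\in I$, the factor $1/(x-y)$ has constant sign and $|x-y|\asymp|I|$. Test the hypothesis with $f=\mathbf{1}_I$, so $\|f\|_{L^p(v)}=v(I)^{1/p}$ and
$$
H^m_b f(x)=\int_I\frac{(b(x)-b(y))^m}{x-y}\,dy\qquad(x\in\tilde I).
$$

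\medskip

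\noindent\emph{Step 3 (positivity + Jensen).} Because $m$ is even, $(b(x)-b(y))^m\ge 0$, and since $1/(x-y)$ is single--signed on the domain, there is no cancellation, so
$$
|H^m_b f(x)|\;\asymp\;\frac{1}{|I|}\int_I(b(x)-b(y))^m\,dy.
$$
Jensen's inequality for the convex function $t\mapsto t^m$ applied to $dy/|I|$ gives
$$
\frac{1}{|I|}\int_I(b(x)-b(y))^m\,dy\;\ge\;(b(x)-b_I)^m,
$$
hence $|H^m_b f(x)|\gtrsim (b(x)-b_I)^m$ for $x\in\tilde I$. Raising to the $p$th power, integrating against $u$ over $\tilde I$, and applying the hypothesis yields
\begin{equation}\label{eq:planstar}
\int_{\tilde I}(b-b_I)^{mp}\,u\;\lesssim\;\int_{\tilde I}|H^m_bf|^p u\;\le\;\mathsf{C}^p\,v(I).
\end{equation}

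\medskip

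\noindent\emph{Step 4 (transfer from $\tilde I$ back to $I$).} Applying \eqref{eq:planstar} with $I$ replaced by $I_0:=I-2|I|$ (so that its associated translate is exactly $I$) gives
$$
\int_I(b-b_{I_0})^{mp}\,u\;\lesssim\;\mathsf{C}^p\,v(I_0).
$$
A triangle inequality then produces
$$
\int_I(b-b_I)^{mp}\,u\;\lesssim\;\mathsf{C}^p v(I_0)+|b_I-b_{I_0}|^{mp}\,u(I).
$$
To bound the error term, note that $b_I-b_{I_0}=\frac{1}{|I|}\int_I(b-b_{I_0})$ and apply H\"older together with the estimate already obtained for $\int_I(b-b_{I_0})^{mp}u$. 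Combining this with the auxiliary test function $f=\mathbf{1}_{I_0}$ (which yields, by Steps~2--3 with the roles reversed, a bound on $\int_{I_0}(b-b_I)^{mp}u$ by $\mathsf{C}^p v(I)$) allows one to absorb the error back into the left--hand side and conclude
$$
\int_I(b-b_I)^{mp}\,u\;\lesssim\;\mathsf{C}^p v(I),
$$
which is the desired first condition in \eqref{necconditer}.

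\medskip

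\noindent\emph{Main obstacle.} Step~1 through Step~3 are essentially forced by the structure of the problem: the evenness of $m$ and the sign--rigidity of the Hilbert kernel across two non--overlapping intervals make the lower bound $|H^m_b f(x)|\gtrsim(b(x)-b_I)^m$ almost automatic once one uses Jensen. The technical heart of the argument is Step~4: moving the inequality from the translated interval $\tilde I$ back to $I$ and, simultaneously, replacing $v(\tilde I)$ by $v(I)$. In the one--weight case this is immediate because $v(I)=v(\tilde I)$; in the two--weight setting one must close the loop by testing the hypothesis on a second, carefully chosen function so that the two inequalities couple and the error $|b_I-b_{I_0}|^{mp}u(I)$ is absorbed. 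This is precisely the step for which the extension beyond $m=1$ requires that $m$ be even and $b$ real, since only then does the positivity $(b(x)-b(y))^m\ge 0$ survive on both sides of $I$ simultaneously.
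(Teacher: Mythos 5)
There is a genuine gap, and it is exactly where you flag the ``main obstacle'': Step~4 cannot be completed. Testing with $f=\mathbf{1}_I$ and evaluating $H^m_bf$ on a disjoint translate $\tilde I$ produces, after applying the hypothesis, an inequality of the form $\int_{\tilde I}(b-b_I)^{mp}u\lesssim\mathsf{C}^p\,v(I)$; the $u$-mass is on $\tilde I$ while the $v$-mass is on $I$. Shifting roles (taking $I_0=I-2|I|$ as the test interval) gives $\int_I(b-b_{I_0})^{mp}u\lesssim\mathsf{C}^p\,v(I_0)$, which now has the wrong interval $I_0$ in the weight $v$ and the wrong base point $b_{I_0}$ in the oscillation. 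In the general two-weight setting there is no a~priori relation between $v(I)$ and $v(I_0)$, nor between $u(I)$ and $u(I_0)$ (no doubling, no $A_p$, no translation invariance), so neither inequality can be converted into the target $\int_I(b-b_I)^{mp}u\lesssim\mathsf{C}^p\,v(I)$. Your proposed fix via the triangle inequality leaves the error term $|b_I-b_{I_0}|^{mp}u(I)$, and bounding $|b_I-b_{I_0}|$ by an average and applying H\"older produces unweighted averages $\dashint_I|b-b_{I_0}|^{mp}$ which cannot be compared to the $u$-weighted integrals you control; there is no closable system of inequalities here. (Also, the remark that the one-weight case is ``immediate because $v(I)=v(\tilde I)$'' is incorrect: a nonconstant weight does not have equal mass on translated intervals.)

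The paper's proof sidesteps this entirely by never leaving the interval $I$. It applies Jensen exactly as you do in Step~3 (this is where evenness of $m$ and realness of $b$ enter, via $|b(x)-b(y)|^{2k}=(b(x)-b(y))^{2k}$), but then, instead of localizing $x$ to a translate, it multiplies and divides by $(x-y)$ and uses the algebraic identity $x-y=(x-c)+(c-y)$, where $c$ is the center of $I$. This converts $\int_I\dashint_I(b(x)-b(y))^{2k}f(x)u(x)\,dy\,dx$ into
$$\int_\R H^{2k}_b(\mathbf{1}_I)(x)\,g_c(x)\,f(x)\,u(x)\,dx - \int_\R H^{2k}_b(g_c)(x)\,f(x)\,u(x)\,dx,$$
with $g_c(t)=\frac{t-c}{|I|}\mathbf{1}_I(t)$, $\|g_c\|_\infty\le\tfrac12$. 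Both auxiliary test functions $\mathbf{1}_I$ and $g_c$ are supported on $I$ itself, so the hypothesis gives $v(I)^{1/p}$, matching the $u$-integral which is also over $I$; H\"older in $L^p(u)$--$L^{p'}(u)$ with $f=|b-b_I|^{2k(p-1)}\mathbf{1}_I$ then closes the estimate by rearrangement. This ``commutator of the test function'' trick (the same one used for $m=1$ in the paper) is the missing ingredient; without it, the translated-interval approach does not survive the passage to two general weights.
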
 

The necessity of condition \eqref{neccond} was discovered independently by
Isralowitz, Pott and Treil \cite{IsPoTr} in higher dimensions. We
provide a different proof for the Hilbert transform, which has the
advantage that it yields~\eqref{necconditer} when $m>1$ is even. On
the other hand, while our approach works for the iterated
commutators of even orders, it does not seem to work for odd orders.

It was also observed in
\cite{IsPoTr} that condition \eqref{necconditer} for $m=1$ is
equivalent to the weighted $BMO$ spaces considered in
\cite{MR805955,MR3451366,MR3606434,MR3919564} when $u,v\in A_p$. Thus
Theorem \ref{necHilbert} provides a new proof of the necessity condition
in Bloom's original result.

\medskip

We now consider  the one weight case $u=v$. We say a weight $w\in A_p$ if 
\begin{equation}\label{eqn:Ap}
  [w]_{A_p}=\sup_I\left(\dashint_I w\right)
  \left(\dashint_I w^{-\frac{p'}{p}}\right)^{\frac{p}{p'}}<\infty.\end{equation}
If we combine Theorems \ref{thm:main} and \ref{necHilbert}
in the one weight case and use some well-known properties of $A_p$
weights, then we obtain the following characterization for the Hilbert
transform. When assuming $w\in A_p$ it is well known that $H^m_b$ is
bounded on $L^p(w)$ when $b\in BMO$.  However, the necessity of $BMO$
seems to be new for  higher order commutators (see \cite{MR3716205}
for the necessity when $m=1$).

\begin{corollary} \label{cor:one-wt-H}
Suppose $m\in \{1\}\cup 2\N$, $1<p<\infty$, $b$ is a real valued
function in $L^1_{\mathsf{loc}}(\R)$, $w\in A_p$, and $H$ is the
Hilbert transform on $\R$. Then $H^m_b$ is bounded on $L^p(w)$ if and
only if $b\in BMO$. 
\end{corollary}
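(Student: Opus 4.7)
The forward direction, that $H_b^m$ is bounded on $L^p(w)$ whenever $b \in BMO$, is the classical one-weight estimate for commutators recalled just before the statement; alternatively it follows from Corollary \ref{thm:BMObump} applied with $u = v = w$, since the reverse H\"older inequality for $A_p$ weights lets one absorb the logarithmic Orlicz bumps on $w^{1/p}$ and $w^{-1/p}$ into the $A_p$ condition. So the only real content is necessity.

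Assume $H_b^m$ is bounded on $L^p(w)$. Applying Theorem \ref{necHilbert} when $m = 1$, or Theorem \ref{necHilbertiterated} when $m \in 2\N$, with $u = v = w$ gives
\begin{equation*}
\sup_I \left(\frac{1}{w(I)} \int_I |b - b_I|^{mp}\, w\, dx\right)^{1/p} \leq \mathsf{C}.
\end{equation*}
The goal is to convert this weighted $BMO$-type bound into membership in ordinary $BMO$. The plan is a single application of H\"older's inequality with the factorization $1 = w^{1/(mp)} \cdot w^{-1/(mp)}$ and exponents $mp$ and $(mp)'$:
\begin{equation*}
\dashint_I |b-b_I|\, dx \leq \left(\dashint_I |b-b_I|^{mp}\, w\right)^{1/(mp)} \left(\dashint_I w^{-1/(mp-1)}\right)^{(mp-1)/(mp)}.
\end{equation*}
The first factor equals $(w(I)/|I|)^{1/(mp)} \mathsf{C}^{1/m}$ by the displayed hypothesis. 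For the second, $m \geq 1$ forces $(p-1)/(mp-1) \leq 1$, so Jensen's inequality on a concave power yields $\dashint_I w^{-1/(mp-1)} \leq (\dashint_I w^{-1/(p-1)})^{(p-1)/(mp-1)}$, and the $A_p$ condition controls $(\dashint_I w^{-1/(p-1)})^{p-1}$ by $[w]_{A_p}\,(w(I)/|I|)^{-1}$. After combining exponents, the factors of $w(I)/|I|$ cancel exactly, leaving $\|b\|_{BMO} \lesssim [w]_{A_p}^{1/(mp)} \mathsf{C}^{1/m}$.

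The only slightly delicate step is this exponent bookkeeping, which is essentially a hands-on version of the classical Muckenhoupt--Wheeden equivalence between $BMO$ and its weighted $L^q$ analogues for $A_\infty$ weights; one could equally well cite that theorem directly and bypass the Hölder computation. The restriction $m \in \{1\} \cup 2\N$ in the statement is inherited entirely from Theorems \ref{necHilbert} and \ref{necHilbertiterated}, not from this step, so any extension of those necessary conditions to odd $m > 1$ would immediately extend Corollary \ref{cor:one-wt-H} to all $m$.
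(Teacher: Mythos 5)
Your proof is correct, and both halves match the paper in spirit while differing in some details of execution. For necessity, the paper applies H\"older's inequality with exponents $p,p'$ to $\dashint_I |b-b_I|^m$ and then uses the $A_p$ two-sided estimate relating $(\dashint_I w^{-p'/p})^{1/p'}$ to $(|I|/w(I))^{1/p}$, landing on $\dashint_I|b-b_I|^m \leq \mathsf{C}[w]_{A_p}^{1/p}$ and only implicitly invoking Jensen at the end. You instead apply H\"older with exponents $mp,(mp)'$ directly to $\dashint_I|b-b_I|$, using Jensen to pass from $\dashint_I w^{-1/(mp-1)}$ to $\dashint_I w^{-1/(p-1)}$; the exponent bookkeeping you carry out is exactly right, and the powers of $w(I)/|I|$ do cancel. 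These are cosmetic rearrangements of the same argument. For sufficiency, the paper does not simply cite the classical one-weight result; it reproves it by verifying the hypotheses of Theorem \ref{thm:main} directly, taking $A(t)=C(t)=t^{sp}$, $B(t)=D(t)=t^{sp'}$ for $1<s<r$ in the reverse H\"older range, and invoking John--Nirenberg to control $\bigl(\dashint_I|b-b_I|^{msrp/(r-s)}\bigr)^{(r-s)/(srp)}$. Your alternative---go through Corollary \ref{thm:BMObump} with $u=v=w$, then absorb the log bumps on $w^{\pm 1/p}$ into the reverse H\"older exponent---is also valid and a bit cleaner as a citation chain, since Corollary \ref{thm:BMObump} already packages the John--Nirenberg step; what the paper's direct verification buys is an explicit quantitative bound $\lesssim \|b\|_{BMO}^m[w]_{A_p}^{1/p}$ and independence from the intermediate corollaries. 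Your closing remark, that the restriction $m\in\{1\}\cup 2\N$ is inherited solely from the two necessity theorems, matches the paper's discussion.
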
 

The remainder of the paper is organized as follows. In Section~\ref{section:prelim} we
give the necessary definitions and machinery about approximation of
commutators by so-called sparse operators. We also state two results,
Theorem~\ref{thm:mainsparse} and Corollary~\ref{thm:mainsparse*}, for
the sparse operators that together implies Theorem~\ref{thm:main}. We
prove all our main results in Section~\ref{section:proof}.  
In Section~\ref{section:roots} we prove Theorem
\ref{thm:bkinBMO}
for the class $\sqrt[a]{BMO}$.  Finally, in Section~\ref{section:necc}
we prove the necessary conditions for the
Hilbert transform, Theorems \ref{necHilbert} and
\ref{necHilbertiterated}, and Corollary~\ref{cor:one-wt-H}.

\section{Preliminaries}
\label{section:prelim}
\subsection{Calder\'on-Zygmund operators} We say that $K(x,y)$ defined on $(x,y)\in \R^{2n}$ with $x\not=y$ is a standard kernel if 
$$|K(x,y)|\lesssim \frac{1}{|x-y|^n}, \quad x\not=y,$$
and there exists some $\delta>0$ such that
$$|K(x+h,y)-K(x,y)|+|K(x,y+y)-K(x,y)|\lesssim \frac{|h|^\delta}{|x-y|^{n+\delta}}$$
whenever $|x-y|>2|h|$.  An operator $T$ is said to be a Calder\'on-Zygmund operator if $T$ is bounded on $L^2(\R^n)$ and has the integral representation
$$Tf(x)=\int_{\R^n}K(x,y)f(y)\,dy$$
for all $f\in L^2_c(\R^n)$ and $x\notin \supp f$.

\subsection{Orlicz spaces}

We will need some basic facts and notation for Young functions and
Orlicz spaces.  Here we follow the treatment given
in~\cite[Chapter~5]{MR2797562}.  For functions $A$ and $B$ we will use
the notation $A(t)\lesssim B(t)$ to mean that there exist constants
$c,t_0>0$ such that $A(t)\leq cB(t)$ for $t\geq t_0$ and
$A(t)\eqsim B(t)$ to mean that $A(t)\lesssim B(t)$ and
$B(t)\lesssim A(t)$.  As mentioned in Section 1, every Young function
has an associate Young function. Given $\Phi$ the associate Young
function is defined by
$$\overline{\Phi}(t)=\sup_{s>0}(st-\Phi(s))$$
and satisfies
$$s\leq {\Phi}^{-1}(s)\bar{\Phi}^{-1}(s)\leq 2s.$$
Recall that the Orlicz average is given by
$$\|f\|_{\Phi,Q}=\inf\left\{\lambda>0: \dashint_Q \Phi\Big(\frac{|f(x)|}{\lambda}\Big)\,dx\leq 1\right\}.$$
We will need the general version of H\"older's inequality for Orlicz spaces (see \cite{MR2797562, MR3695871} for details).
\begin{lemma}\label{lem:orliczH} Let $A, B$ be continuous and strictly increasing functions on $[0,\infty)$ and $C$ be Young function that satisfies $A^{-1}(t)B^{-1}(t) \lesssim C^{-1}(t)$ for $t$ large. Then
$$\|fg\|_{C,Q}\lesssim \|f\|_{A}\|g\|_{B}.$$
\end{lemma}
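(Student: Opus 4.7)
The plan is to follow the classical strategy for generalized H\"older inequalities in Orlicz spaces: normalize, reduce to a pointwise Young-type inequality, and then integrate. By homogeneity of the Luxemburg norm it suffices to prove $\|fg\|_{C,Q}\le K_0$ under the normalization $\|f\|_{A,Q}\le 1$ and $\|g\|_{B,Q}\le 1$, with $K_0$ depending only on $A,B,C$ and the implicit constant in the hypothesis.

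The heart of the argument is a pointwise inequality of Young type: for a suitable constant $K$ and all nonnegative reals $a,b$,
\[
C\!\left(\frac{ab}{K}\right)\le A(a)+B(b)+1.
\]
Writing the hypothesis as $A^{-1}(t)B^{-1}(t)\le \kappa\, C^{-1}(t)$ for $t\ge t_0$, and setting $s=A(a)$, $r=B(b)$, the regime $s+r\ge t_0$ follows from the monotonicity of $A^{-1}$ and $B^{-1}$:
\[
ab=A^{-1}(s)\,B^{-1}(r)\le A^{-1}(s+r)\,B^{-1}(s+r)\le \kappa\,C^{-1}(s+r),
\]
so applying the increasing function $C$ yields $C(ab/\kappa)\le A(a)+B(b)$. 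In the complementary regime $A(a)+B(b)<t_0$, both $a$ and $b$ are uniformly bounded (by $A^{-1}(t_0)$ and $B^{-1}(t_0)$ respectively), hence so is $ab$, and choosing $K$ large enough absorbs the residual $C(ab/K)$ into the $+1$ term by the continuity of $C$ at the origin.

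Integrating the pointwise bound over $Q$ and using the normalizations yields
\[
\dashint_Q C\!\left(\frac{|f(x)g(x)|}{K}\right)\!dx \le \dashint_Q A(|f|)\,dx + \dashint_Q B(|g|)\,dx + 1 \le 3.
\]
Convexity of the Young function $C$ gives $C(t/3)\le C(t)/3$, so $\dashint_Q C\bigl(|fg|/(3K)\bigr)\,dx\le 1$, and the Luxemburg norm definition forces $\|fg\|_{C,Q}\le 3K$. The only real obstacle is the bookkeeping required to extend the pointwise inequality from the large-argument regime, where the hypothesis directly applies, to all nonnegative $a,b$; once this is organized, the remaining steps are essentially formal.
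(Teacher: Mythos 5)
Your proof is correct. Note that the paper does not actually prove Lemma~\ref{lem:orliczH}---it simply cites \cite{MR2797562, MR3695871} for the details---but the argument you give (normalize via homogeneity of the Luxemburg norm, establish the pointwise Young-type inequality $C(ab/K)\le A(a)+B(b)+1$ by splitting into the regimes $A(a)+B(b)\ge t_0$ and $A(a)+B(b)<t_0$, integrate, and finish with convexity of $C$) is precisely the standard proof found in those references, so this is the same approach.
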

Moreover, we define the Orlicz maximal function
$$M_\Phi f(x)=\sup_{Q\in x} \|f\|_{\Phi,Q}.$$
P\'erez \cite{perez94} showed that $M_\Phi$ is bounded on $L^p$ if and only if $\Phi\in B_p$:
$$\int_1^\infty\frac{\Phi(t)}{t^p}\frac{dt}{t}<\infty.$$
 As mentioned in the introduction, $\Phi(t)=t^{p-\ep}$ and $\Phi(t)=t^p(\log(e+t))^{-1-\delta}$ are typical $B_p$ Young functions. The Young function $\Psi(t)=t^p\log(e+t)^q$ for $p>1$ and $q\in \R$ is called a log-bump and $\Theta(t)=t^p\log(e+t)^q\log[\log(e^e+t)]^r$ for $p>1$ and $q,r\in \R$ is known as a log-log bump.  Will use the following calculations several times in what follows:
 \begin{equation}\label{eqn:loginvers}\Psi^{-1}(t)\eqsim \frac{t^{\frac1p}}{\log(e+t)^{\frac{q}{p}}},\end{equation}
  \begin{equation}\label{eqn:logassoc}\bar{\Psi}(t)\eqsim \frac{t^{p'}}{\log(e+t)^{\frac{p'}{p}q}},\end{equation}
  \begin{equation}\label{eqn:logloginv}\Theta^{-1}(t)\eqsim \frac{t^{\frac1p}}{\log(e+t)^{\frac{q}{p}}\log\log(e^e+t)]^{\frac{r}{p}}},\end{equation}
  \begin{equation}\label{eqn:loglogassoc}\bar{\Theta}(t)\eqsim \frac{t^{p'}}{\log(e+t)^{\frac{p'}{p}q}\log[\log(e^e+t)]^{\frac{p'}{p}r}}.\end{equation}
  These estimates are well-known in the literature on bump conditions (see for example \cite[p. 424]{MR1713140}) and we refer the reader to the book \cite[Chapter 5]{MR2797562} for more details.

\subsection{Sparse Families}

Over the past decade sparse families have played a central role in the study of Calder\'on-Zygumd operators.  For our purpose we will follow the approach from \cite{MR4050113,MR3695871}.  A cube is a subset of $\R^n$ of the form $Q=a+[0,h)^n$ where $a\in \R^n$ and $h>0$. We call $h$ the side length of $Q$ and write $\ell(Q)=h$. A collection of cubes $\mathscr D$ is said to be a dyadic grid if for each $Q\in \mathscr D$, $\ell(Q)=2^k$ for some $k\in \Z$, for each $k\in \Z$ the set $\{Q\in \mathscr D:\ell(Q)=2^k\}$ forms a partition of $\R^n$, and given $Q,P\in \mathscr D$ one has $Q\cap P\in \{\varnothing,P,Q\}$.

Given a dyadic grid $\mathscr D$, a family of cubes
$\mathcal S\subset \mathscr D$ is a {\it sparse family} if there
exists $0<\delta<1$, such that for each $Q\in \Sp$ there exists a
measurable subset $E_Q\subset Q$ with $|E_Q|\geq \delta|Q|$ and the
family $\{E_Q:Q\in \Sp\}$ is disjoint. Recently, Lerner, Ombrosi, and
Rivera-R\'ios \cite{MR3695871} proved a sparse domination formula for
commutators of Calder\'on-Zygmund operators. This was later extended
to the iterated commutators by Iba\~nez-Firnkorn and Rivera-R\'ios
\cite{MR4050113}. In particular they showed that given a CZO $T$,
$b\in L^1_{\textsf{loc}}(\R^n)$, and $f\in L^\infty_c(\R^n)$, there
exist $3^n$ sparse families $\mathcal S_j\subseteq \mathscr D_j$,
$j=1,\ldots, 3^n$, such that
\begin{equation}\label{eqn:sparsedom} |T^m_bf(x)|\lesssim \sum_{j=1}^{3^n}\sum_{Q\in \mathcal S_j}\sum_{k=0}^m|b(x)-b_Q|^{m-k}\left(\dashint_Q|b-b_Q|^kf\right){\mathbf 1}_Q(x).\end{equation}

Our first lemma shows that we can simplify  inequality \eqref{eqn:sparsedom}
by only having to work with the endpoints of the sum corresponding
to $k=0$ and $k=m$.  More precisely, given a sparse family $\mathcal S$ and
$b\in L^1_{\textsf{loc}}(\R^n)$, let
\begin{equation}\label{eqn:sparsecom}T^m_{\mathcal S,b}f(x)=\sum_{Q\in \mathcal S} \left(\dashint_Q |b-b_Q|^mf\right){\mathbf 1}_Q(x).\end{equation}
The adjoint operator $(T^m_{\mathcal S,b})^*$, defined by
$\int (T^m_{\mathcal S,b}f)g=\int f[(T^m_{\mathcal S,b})^*g]$, is
given by
\begin{equation}\label{eqn:sparsecom*}
  (T^m_{\mathcal S,b})^*f(x)=\sum_{Q\in \mathcal S} |b(x)-b_Q|^m
  \left(\dashint_Q f\right){\mathbf 1}_Q(x).\end{equation}

\begin{lemma} \label{lem:sparsebound} Suppose $T$ is a CZO,
  $b\in L^1_{\textsf{loc}}(\R^n)$, and $f\in L^\infty_c(\R^n)$.  Then
  there exist $3^n$ sparse families
  $\mathcal S_j\subseteq \mathscr D_j$, $j=1,\ldots, 3^n$, such that
\begin{equation}\label{eqn:sparsedom2} |T^m_bf(x)|\lesssim \sum_{j=1}^{3^n} T^m_{\mathcal S_j,b}f(x)+(T^m_{\mathcal S_j,b})^*f(x).\end{equation}
\end{lemma}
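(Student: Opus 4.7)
The plan is to start from the sparse domination formula \eqref{eqn:sparsedom} of Iba\~nez-Firnkorn and Rivera-R\'ios and show that the intermediate terms $0<k<m$ can each be pointwise bounded by the sum of the two endpoint terms $k=0$ and $k=m$. Once that reduction is achieved, the lemma follows immediately, since the $k=m$ term gives exactly $T^m_{\mathcal S_j,b}f$ while the $k=0$ term gives exactly $(T^m_{\mathcal S_j,b})^*f$, as defined in \eqref{eqn:sparsecom} and \eqref{eqn:sparsecom*}.

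The key observation is that, for each cube $Q$, we can rewrite the intermediate term by bringing $|b(x)-b_Q|^{m-k}$ inside the average:
\begin{equation*}
  |b(x)-b_Q|^{m-k}\Big(\dashint_Q|b(y)-b_Q|^k|f(y)|\,dy\Big)
  =\dashint_Q |b(x)-b_Q|^{m-k}|b(y)-b_Q|^{k}|f(y)|\,dy.
\end{equation*}
To this integrand, I would apply Young's inequality $a\,b\le \tfrac{m-k}{m}a^{m/(m-k)}+\tfrac{k}{m}b^{m/k}$ with $a=|b(x)-b_Q|^{m-k}$ and $b=|b(y)-b_Q|^{k}$, obtaining the pointwise estimate
\begin{equation*}
  |b(x)-b_Q|^{m-k}|b(y)-b_Q|^{k}
  \le \tfrac{m-k}{m}|b(x)-b_Q|^{m}+\tfrac{k}{m}|b(y)-b_Q|^{m}.
\end{equation*}
Multiplying by $|f(y)|$ and averaging in $y$ over $Q$ yields
\begin{equation*}
  |b(x)-b_Q|^{m-k}\Big(\dashint_Q|b-b_Q|^k|f|\Big){\bf 1}_Q(x)
  \le |b(x)-b_Q|^{m}\Big(\dashint_Q|f|\Big){\bf 1}_Q(x) + \Big(\dashint_Q|b-b_Q|^{m}|f|\Big){\bf 1}_Q(x),
\end{equation*}
after absorbing the constants $(m-k)/m,k/m\le 1$.

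Summing this estimate over $Q\in\mathcal S_j$ and over the intermediate indices $0<k<m$, and combining with the two endpoint terms from \eqref{eqn:sparsedom} (which are already in the desired form), produces exactly \eqref{eqn:sparsedom2}. Since the pointwise Young step is completely elementary, there is no real obstacle; the only substantive input is the sparse domination \eqref{eqn:sparsedom} itself, which is cited from \cite{MR4050113}. One should replace $f$ by $|f|$ at the outset so that Young's inequality applies pointwise, and note that the implicit constant depends only on $m$ and $n$.
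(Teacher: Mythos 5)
Your proof is correct and follows essentially the same strategy as the paper: start from the sparse domination formula \eqref{eqn:sparsedom} and absorb the intermediate terms $0<k<m$ into the endpoint terms $k=0$ and $k=m$ via an elementary pointwise bound inside the averaging integral. The only difference is cosmetic: you use Young's inequality $|b(x)-b_Q|^{m-k}|b(y)-b_Q|^{k}\le \tfrac{m-k}{m}|b(x)-b_Q|^{m}+\tfrac{k}{m}|b(y)-b_Q|^{m}$, while the paper uses the observation $|b(x)-b_Q|^{m-k}|b(y)-b_Q|^{k}\le\max\{|b(x)-b_Q|,|b(y)-b_Q|\}^m\le |b(x)-b_Q|^m+|b(y)-b_Q|^m$; both give the same bound with an $O(m)$ constant. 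Your remark about replacing $f$ by $|f|$ is a fair point of care that the paper leaves implicit.
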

\begin{proof} Fix a sparse family of cubes $\mathcal S$ and let  $x\in
  \R^n$ and $Q\in \mathcal S$ be such that $x\in Q$.  Then, for each
  $k$, $1<k<m$,
\begin{align*}
  & \sum_{k=0}^m|b(x)-b_Q|^{m-k}\dashint_Q |b(y)-b_Q|^k
    f(y)\,dy \\
&\hspace{3cm}  =\dashint_Q \left(\sum_{k=0}^m|b(x)-b_Q|^{m-k}|b(y)-b_Q|^k\right) f(y)\,dy\\
&\hspace{3cm} \leq \dashint_Q \left(\sum_{k=0}^m\max\{|b(x)-b_Q|,|b(y)-b_Q|\}^{m}\right) f(y)\,dy \\
&\hspace{3cm}= m\dashint_Q \max\{|b(x)-b_Q|^m,|b(y)-b_Q|^m\} f(y)\,dy \\
&\hspace{3cm}\lesssim |b(x)-b_Q|^m\dashint_Q f(y)\,dy+\dashint_Q|b(y)-b_Q|^mf(y) \,dy. 
\end{align*}
Inequality \eqref{eqn:sparsedom2} now follows from \eqref{eqn:sparsedom}.
\end{proof}

We now state our main result for a general sparse operator
$T^m_{\mathcal S,b}$.

\begin{theorem}\label{thm:mainsparse}
Suppose $m\in \N$, $b\in L^1_{\textsf{loc}}(\R^n)$, $\Sp$ is a
  sparse family, and $A,\,B$ are Young functions with
  $\bar{A}\in B_{p'}$ and $\bar{B}\in B_p$.  If $(u,v)$ are a
  pair of weights that satisfy
\begin{equation}\label{eqn:bumpsparse}\sup_{Q\in
    \Sp}\|u^{\frac1p}\|_{A,Q}\|(b-b_Q)^mv^{-\frac{1}{p}}\|_{B,Q}<\infty,
\end{equation}
then the sparse operator $T^m_{\mathcal S,b}$ \eqref{eqn:sparsecom}
satisfies
\begin{equation}\label{eqn:sparseLpineq}
  \|T^m_{\mathcal S,b}f\|_{L^p(u)}\leq C\|f\|_{L^p(v)}\end{equation}
for all $f\in L^p(v)$.

Conversely, if $T^m_{\mathcal S,b}$ satisfies \eqref{eqn:sparseLpineq},
then the pair of weights $(u,v)$ satisfies \eqref{eqn:bumpsparse} with
$A(t)=t^p$ and $B(t)=t^{p'}$: that is,
$$\sup_{Q\in \Sp} \left(\dashint_Qu\right)^{\frac1p} \left(\dashint_Q|b-b_Q|^{mp'}v^{-\frac{p'}{p}}\right)^{\frac1{p'}}<\infty.$$
\end{theorem}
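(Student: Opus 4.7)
The plan is to prove the sufficiency by a duality argument combined with the sparseness property and the $L^p$-boundedness of the Orlicz maximal functions $M_{\bar A}$ and $M_{\bar B}$; the necessity follows by testing the operator on nonnegative functions supported on a single sparse cube.

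For the sufficiency, I would dualize, writing $\|T^m_{\mathcal S,b}f\|_{L^p(u)} = \sup_{\|g\|_{L^{p'}(u)}=1}\int T^m_{\mathcal S,b}f \cdot gu\,dx$, and expand using \eqref{eqn:sparsecom} as $\sum_{Q\in\mathcal S}\bigl(\dashint_Q |b-b_Q|^m f\bigr)\int_Q gu$. Applying Lemma \ref{lem:orliczH} twice---splitting $|b-b_Q|^m f = [(b-b_Q)^m v^{-1/p}]\cdot [fv^{1/p}]$ paired against $B$ and $\bar B$, and $gu = [gu^{1/p'}]\cdot u^{1/p}$ paired against $\bar A$ and $A$---produces
$$\sum_{Q\in\mathcal S} |Q|\,\|u^{1/p}\|_{A,Q}\|(b-b_Q)^m v^{-1/p}\|_{B,Q}\,\|fv^{1/p}\|_{\bar B,Q}\|gu^{1/p'}\|_{\bar A,Q}.$$
Hypothesis \eqref{eqn:bumpsparse} collapses the first two Orlicz norms into a uniform constant. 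Then I would bring in the sparseness: each $Q\in\mathcal S$ has a pairwise disjoint major subset $E_Q\subset Q$ with $|Q|\leq \delta^{-1}|E_Q|$, and for $x\in E_Q$ both $\|fv^{1/p}\|_{\bar B,Q}$ and $\|gu^{1/p'}\|_{\bar A,Q}$ are dominated pointwise by $M_{\bar B}(fv^{1/p})(x)$ and $M_{\bar A}(gu^{1/p'})(x)$. Summing with $|Q|\leq \delta^{-1}|E_Q|$ and disjointness of the $E_Q$ bounds the sum by $\int M_{\bar B}(fv^{1/p})M_{\bar A}(gu^{1/p'})\,dx$, and classical H\"older yields $\|M_{\bar B}(fv^{1/p})\|_{L^p}\|M_{\bar A}(gu^{1/p'})\|_{L^{p'}}$. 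P\'erez's theorem, together with $\bar B\in B_p$ and $\bar A\in B_{p'}$, then controls these maximal operators by $\|f\|_{L^p(v)}\|g\|_{L^{p'}(u)}$, finishing the sufficiency.

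For the necessity, I would fix $Q_0\in\mathcal S$ and test on $f = g\mathbf 1_{Q_0}$ with $g\geq 0$. Since every term of the sparse sum is nonnegative, keeping only the $Q=Q_0$ term gives the pointwise lower bound $T^m_{\mathcal S,b}f(x)\geq\bigl(\dashint_{Q_0}|b-b_{Q_0}|^m g\bigr)\mathbf 1_{Q_0}(x)$. Feeding this into the assumed $L^p(v)\to L^p(u)$ bound produces
$$\Bigl(\int_{Q_0}u\Bigr)^{1/p}\frac{1}{|Q_0|}\int_{Q_0}|b-b_{Q_0}|^m g \leq C\Bigl(\int_{Q_0}g^p v\Bigr)^{1/p},$$
and taking the supremum over nonnegative $g$ via the $L^p(v)$--$L^{p'}(v^{1-p'})$ duality on $Q_0$ (so that the optimal $g$ is proportional to $|b-b_{Q_0}|^{m(p'-1)}v^{-p'/p}$) yields
$$\Bigl(\dashint_{Q_0}u\Bigr)^{1/p}\Bigl(\dashint_{Q_0}|b-b_{Q_0}|^{mp'}v^{-p'/p}\Bigr)^{1/p'}\leq C,$$
which is the claimed necessary condition since $Q_0\in\mathcal S$ was arbitrary.

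The main obstacle I anticipate is in the sufficiency: one must split each averaged product so that the Orlicz factors pair off correctly in Lemma \ref{lem:orliczH} and so that the companion functions $\bar A$ and $\bar B$ fall precisely into the classes $B_{p'}$ and $B_p$ required to invoke P\'erez's theorem. The reduction from a sparse sum to an integral against Orlicz maximal functions via the major subsets $E_Q$ is standard, but requires care in tracking the sparseness constant when converting $|Q|$ into $|E_Q|$.
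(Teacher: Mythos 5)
Your proposal is correct and follows essentially the same strategy as the paper's proof: for sufficiency, dualize the sparse sum, split each averaged term with the generalized Orlicz--H\"older inequality, absorb the bump constants, pass from $|Q|$ to $|E_Q|$ by sparseness and disjointness, and finish with H\"older and the $B_p$/$B_{p'}$ boundedness of $M_{\bar B}$, $M_{\bar A}$; for necessity, test on a function supported on a single sparse cube and recover the unbumped condition, with your duality framing simply identifying the same extremal test function $|b-b_{Q_0}|^{m(p'-1)}v^{-p'/p}\mathbf 1_{Q_0}$ that the paper plugs in directly.
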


As a corollary we obtain the corresponding result for the adjoint
operator $(T^m_{\mathcal S,b})^*$, since any linear operator $S$
satisfies $S:L^p(v)\ra L^p(u)$ if and only if
$S^*:L^{p'}(u^{-\frac{p'}{p}})\ra L^{p'}(v^{-\frac{p'}{p}})$ by duality.

\begin{corollary} \label{thm:mainsparse*} Suppose $m\in \N$,
  $b\in L^1_{\textsf{loc}}(\R^n)$, $\Sp$ is a sparse family, and
  $C,D$ are Young functions with $\bar{C}\in B_{p'}$ and
  $\bar{D}\in B_p$.  If $(u,v)$ are a pair of weights that
  satisfy
$$\sup_{Q\in
  \Sp}\|(b-b_Q)^mu^{\frac1p}\|_{C,Q}\|v^{-\frac{1}{p}}\|_{D,Q}<\infty,$$
then the sparse operator $(T^m_{\mathcal S,b})^*$
\eqref{eqn:sparsecom*} satisfies
\begin{equation}\label{eqn:sparseLpineq*}
  \|(T^m_{\mathcal S,b})^*f\|_{L^p(u)}\leq C\|f\|_{L^p(v)}\end{equation}
for all $f\in L^p(v)$.

Conversely, if $(T^m_{\mathcal S,b})^*$ satisfies
\eqref{eqn:sparseLpineq*}, then the pair of weights $(u,v)$ satisfies
$$\sup_{Q\in \Sp} \left(\dashint_Q|b-b_Q|^{mp}u\right)^{\frac1p} \left(\dashint_Qv^{-\frac{p'}{p}}\right)^{\frac1{p'}}\leq C.$$
\end{corollary}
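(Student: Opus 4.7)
The approach is to derive Corollary \ref{thm:mainsparse*} as a direct consequence of Theorem \ref{thm:mainsparse} via the duality identity noted immediately before the statement: for any linear operator $S$, $S:L^p(v)\to L^p(u)$ is bounded if and only if $S^*:L^{p'}(u^{-p'/p})\to L^{p'}(v^{-p'/p})$ is bounded, with equal operator norms. The defining formulas \eqref{eqn:sparsecom} and \eqref{eqn:sparsecom*} make plain that $(T^m_{\Sp,b})^{**}=T^m_{\Sp,b}$, so boundedness of $(T^m_{\Sp,b})^*$ from $L^p(v)$ to $L^p(u)$ is equivalent to boundedness of $T^m_{\Sp,b}$ from $L^{p'}(u^{-p'/p})$ to $L^{p'}(v^{-p'/p})$.

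For the sufficient direction, I plan to invoke Theorem \ref{thm:mainsparse} with $p$ replaced by $p'$, with weight pair $(\tilde u,\tilde v)=(v^{-p'/p},u^{-p'/p})$, and with Young functions $A=D$, $B=C$. The assumptions $\bar D\in B_p=B_{(p')'}$ and $\bar C\in B_{p'}$ are exactly what Theorem \ref{thm:mainsparse} requires once $p$ has been replaced by $p'$. Its bump condition \eqref{eqn:bumpsparse} in these variables reads
$$\sup_{Q\in\Sp}\bigl\|(v^{-p'/p})^{1/p'}\bigr\|_{D,Q}\bigl\|(b-b_Q)^m(u^{-p'/p})^{-1/p'}\bigr\|_{C,Q}<\infty,$$
which simplifies, via $(v^{-p'/p})^{1/p'}=v^{-1/p}$ and $(u^{-p'/p})^{-1/p'}=u^{1/p}$, to precisely the hypothesis of Corollary \ref{thm:mainsparse*}. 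Applying Theorem \ref{thm:mainsparse} and then transporting the conclusion back through the duality equivalence yields \eqref{eqn:sparseLpineq*}.

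For the necessary direction, I would apply the converse half of Theorem \ref{thm:mainsparse} under the same substitutions. The condition there becomes, in the new variables and using $(p')'=p$,
$$\sup_{Q\in\Sp}\Big(\dashint_Q v^{-p'/p}\Big)^{1/p'}\Big(\dashint_Q|b-b_Q|^{mp}(u^{-p'/p})^{-p/p'}\Big)^{1/p}<\infty,$$
which collapses, since $(u^{-p'/p})^{-p/p'}=u$, to the stated inequality. The argument is essentially a formal application of duality; the only thing that requires care is the consistent transposition of the exponents and of the $B_p$/$B_{p'}$ membership conditions under the substitution, so there is no real obstacle beyond this bookkeeping.
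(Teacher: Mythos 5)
Your argument is correct and is exactly the paper's intended proof: the paper derives Corollary~\ref{thm:mainsparse*} from Theorem~\ref{thm:mainsparse} by the duality equivalence stated immediately before the corollary, and your explicit bookkeeping of the substitutions $p\mapsto p'$, $(u,v)\mapsto(v^{-p'/p},u^{-p'/p})$, $A=D$, $B=C$ is accurate in both directions.
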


\section{Proofs}
\label{section:proof}

In this section we prove our main results.  Theorem \ref{thm:main}
follows immediately from Lemma \ref{lem:sparsebound}, Theorem
\ref{thm:mainsparse}, and Corollary \ref{thm:mainsparse*}.  We now
prove Theorem \ref{thm:mainsparse}.

\begin{proof}[Proof of Theorem \ref{thm:mainsparse}] Let $\Sp$ be a
  sparse family and let $T^m_{\mathcal S,b}$ be the associated
  operator~\eqref{eqn:sparsecom}. 
Furthermore, let 
$$\mathsf{K}=\sup_{Q\in \Sp} \|u^{\frac1p}\|_{A,Q}\|(b-b_Q)^mv^{-\frac{1}{p}}\|_{B,Q}.$$
To estimate $\|T^m_{\mathcal S,b}f\|_{L^p(u)}$ we bound the bilinear
form $\int (T^m_{\mathcal S,b}f)gu$ for $g\in L^{p'}(u)$.  We have
\begin{align*}\int_{\R^n}(T^m_{\mathcal S,b}f)gu\,dx&=\sum_{Q\in \Sp}\left(\dashint_Q|b-b_Q|^mf \right)\left(\dashint_Q gu\right) |Q| \\ 
&\leq \sum_{Q\in \Sp} \|(b-b_Q)^mv^{-\frac{1}{p}}\|_{B,Q} \|fv^{\frac{1}{p}}\|_{\bar{B},Q} \|u^{\frac1p}\|_{A,Q}\|gu^{\frac{1}{p'}}\|_{\bar{A},Q}|Q|\\
 &\lesssim \mathsf{K} \sum_{Q\in \Sp}
 \|fv^{\frac{1}{p}}\|_{\bar{B},Q}\|gu^{\frac{1}{p'}}\|_{\bar{A},Q}|E_Q|\\
&\leq \mathsf{K}
 \int_{\R^n}M_{\bar{B}}(fv^{\frac{1}{p}})M_{\bar{A}}(gu^{\frac{1}{p'}})\,dx\\
  &\leq
    \mathsf{K}\|M_{\bar{B}}(fv^{\frac{1}{p}})\|_{L^p}\|M_{\bar{A}}(gu^{\frac{1}{p'}})\|_{L^{p'}} \\
&\leq \mathsf{K} \|M_{\bar{B}}\|_{\mathcal B(L^p)} \|M_{\bar{A}} \|_{\mathcal B(L^{p'})} \|f\|_{L^p(v)}\|g\|_{L^{p'}(u)}.
\end{align*}

To prove necessity, let $\sigma=v^{-\frac{p'}{p}}$, fix $Q\in \Sp$ 
such that $\int_Q|b-b_Q|^{mp'}\sigma \,dx>0$ (since any other cube
will not contribute to the supremum), and define $f$ by
$$f=|b-b_Q|^{m(p'-1)}\sigma{\mathbf 1}_Q.$$  Then  $f\geq 0$ and for $x\in Q$,
$$T^m_{\mathcal S,b}f(x)\geq \dashint_Q |b-b_Q|^mf\,dx=\dashint_Q |b-b_Q|^{mp'}\sigma\,dx.$$
If we plug this into the norm inequality, we have
\begin{multline*}
 \dashint_Q |b-b_Q|^{mp'}\sigma \; \left(\int_Q
   u\right)^{\frac1p}
 \leq \left(\int_{Q} (T^m_{\mathcal S,b}f)^p u\right)^{\frac1p}\\
 \leq \left(\int_{\R^n} (T^m_{\mathcal S,b}f)^p u\right)^{\frac1p} 
\leq C \left(\int_{\R^n}f^pv\right)^{\frac1p} 
= C\left(\int_{Q} |b-b_Q|^{mp'} \sigma \right)^{\frac1p}.
\end{multline*}
If we rearrange terms, we get
$$\left(\dashint_Q u\,dx\right)^{\frac1p} \left(\dashint_Q |b-b_Q|^{mp'}\sigma\,dx\right)^{\frac{1}{p'}}\leq C.$$ 
\end{proof}

\begin{proof}[Proof of Theorem \ref{thm:oscclassbump}]
  This result is a corollary of Theorem \ref{thm:main} and the Orlicz
  H\"older inequality, Lemma~\ref{lem:orliczH}.  Indeed, since $B,X,\Phi$ satisfy
$$\Phi^{-1}(t)^mX^{-1}(t)\lesssim B^{-1}(t)$$
for $t$ large, if we let $\Phi_m(t)=\Phi(t^{\frac1m})$, we have that
$$\|(b-b_Q)^mv^{-\frac1p}\|_{B,Q}\lesssim \|(b-b_Q)^m\|_{\Phi_m,Q}\|v^{-\frac1p}\|_{X,Q}\eqsim\|(b-b_Q)\|_{\Phi,Q}^m\|v^{-\frac1p}\|_{X,Q}.$$
Here we  used that
$$\|f^m\|_{\Phi_m,Q}=\|f\|_{\Phi,Q}^m,$$
which holds for any Young function when $\Phi_m(t)=\Phi(t^{\frac1m})$.
Hence,
$$\sup_Q\|u^{\frac1p}\|_{A,Q}\|(b-b_Q)^mv^{-\frac1p}\|_{B,Q}\lesssim \|b\|_{\mathsf{Osc}(\Phi)}^m\sup_Q\|u^{\frac1p}\|_{A,Q}\|v^{-\frac1p}\|_{X,Q}<\infty.$$
A similar argument shows that
$$\sup_Q\|(b-b_Q)^mu^{\frac1p}\|_{C,Q}\|v^{-\frac1p}\|_{D,Q}\lesssim  \|b\|_{\mathsf{Osc}(\Phi)}^m\sup_Q\|u^{\frac1p}\|_{Y,Q}\|v^{-\frac1p}\|_{D,Q}<\infty,$$
and thus the hypotheses of Theorem \ref{thm:main} are satisfied.
\end{proof}

\begin{proof}[Proof of Corollary \ref{thm:epsbump}] Define 
\begin{align*}B(t)&=t^{p'}\log(e+t)^{p'-1+\delta},\\
C(t)&=t^{p}\log(e+t)^{p-1+\delta},
\end{align*}
for some $\delta>0$. Then by \eqref{eqn:logassoc} we have that
$$\bar{B}(t)\eqsim\frac{t^{p}}{\log(e+t)^{1+\frac{p}{p'}\delta}}
\ \ \text{and} \ \ \bar{C}(t)\eqsim\frac{t^{p'}}{\log(e+t)^{1+\frac{p'}{p}\delta}},$$
so that $\bar{B}\in B_{p}$ and $\bar{C}\in B_{p'}$.
Now define $X,Y,$ and $\Phi$ by 
\begin{align*}
X(t)&=t^{p'}\log(e+t)^{(1+m\ep)p'-1+\delta},\\
Y(t)&=t^{p}\log(e+t)^{(1+m\ep)p-1+\delta},\\
\Phi(t)&=\exp(t^{\frac1\ep})-1.
\end{align*} 
Then  $\mathsf{Osc}(\Phi)=\mathsf{Osc}(\exp L^{\frac1\ep})$.  We will
show that the two conditions in~\eqref{eqn:checkyoung} in
Theorem~\ref{thm:oscclassbump} hold: that is, that 
$$\Phi^{-1}(t)^mX^{-1}(t)\lesssim B^{-1}(t) \ \ \text{and}\ \  \Phi^{-1}(t)^mY^{-1}(t)\lesssim C^{-1}(t)$$
hold for large $t$.  We will prove the required inequality for the
triple $B,X,$ and $\Phi$; the proof of the other inequality for $C,Y,$
and $\Phi$ is the same.
By~\eqref{eqn:loginvers} we have that
\begin{align*}
B^{-1}(t)&\eqsim\frac{t^{\frac1{p'}}}{\log(e+t)^{\frac1{p}+\frac{\delta}{p'}}},\\
\Phi^{-1}(t)&\eqsim\log(e+t)^\ep, \\
X^{-1}(t)&\eqsim\frac{t^{\frac{1}{p'}}}{\log(e+t)^{m\ep+\frac{1}{p}+\frac{\delta}{p'}}},
\end{align*}
and hence,
$$\Phi^{-1}(t)^mX^{-1}(t)\eqsim\log(e+t)^{m\ep}\frac{t^{\frac1{p'}}}{\log(e+t)^{m\ep+\frac1{p}+\frac{\delta}{p'}}}=\frac{t^{\frac1{p'}}}{\log(e+t)^{\frac1{p}+\frac{\delta}{p'}}}\eqsim B^{-1}(t).$$
\end{proof}

We now prove Corollary \ref{thm:optimalbump}.  This
result will follow from the fact that
$\sqrt[a]{BMO}\subseteq \mathsf{Osc}(\exp L^a)$ (Theorem
\ref{thm:bkinBMO}) and the following  result, which roughly says that
we may take $\ep=0$ in Corollary~\ref{thm:epsbump}.

\begin{theorem} \label{thm:optimalbumpexpclass} Suppose that
  $1<p<\infty$, $m\geq 1$, \ and $A,D$ are Young functions with
  $\bar{A}\in B_{p'}$ and $\bar{D}\in B_{p}$. Suppose further that
  the pair $(u,v)$ satisfies
  \begin{equation*}
    \mathsf{K}=\sup_Q\|u^{\frac1p}\|_{A,Q}\|
    v^{-\frac{1}{p}}\|_{L^{p'}(\log L)^{p'-1+\delta},Q}
    +\sup_Q\|u^{\frac1p}\|_{L^p(\log L)^{p-1+\delta},Q}
    \|v^{-\frac{1}{p}}\|_{D,Q}<\infty
  \end{equation*}
  for some $\delta>0$. If $b\in \mathsf{Osc}(\exp L^{\frac1\ep})$ for
  $0<\ep<\frac{\delta}{m\max\{p,p'\}}$, then 
 $$\|T^m_bf\|_{L^p(u)}\lesssim \mathsf{K}\|b\|^{m}_{\mathsf{Osc}(\exp L^{\frac{1}{\ep}})}\|f\|_{L^p(v)}.$$
\end{theorem}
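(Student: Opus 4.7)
The strategy is to reduce to Theorem \ref{thm:oscclassbump} applied with $\Phi(t) = \exp(t^{1/\epsilon}) - 1$ and a careful choice of the auxiliary Young functions $B, C, X, Y$. Matching the target bumps that appear in the statement, I would set
$$X(t) = t^{p'}\log(e+t)^{p'-1+\delta}, \qquad Y(t) = t^p \log(e+t)^{p-1+\delta},$$
so that the bump hypothesis in Theorem \ref{thm:oscclassbump} collapses exactly to the stated condition $\mathsf{K} < \infty$. What remains is to construct $B$ and $C$ with $\bar B \in B_p$, $\bar C \in B_{p'}$ satisfying the pointwise inequalities \eqref{eqn:checkyoung}.

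Using $\Phi^{-1}(t) \eqsim \log(e+t)^\epsilon$ together with \eqref{eqn:loginvers}, I would compute
$$\Phi^{-1}(t)^m X^{-1}(t) \eqsim \frac{t^{1/p'}}{\log(e+t)^{1/p + \delta/p' - m\epsilon}}.$$
This forces the natural choice $B(t) = t^{p'}\log(e+t)^{p'-1+\delta-m\epsilon p'}$, whose inverse has precisely this asymptotic by \eqref{eqn:loginvers}; the parallel computation on the $u$-side motivates $C(t) = t^p\log(e+t)^{p-1+\delta-m\epsilon p}$. Both inequalities in \eqref{eqn:checkyoung} then hold for large $t$ by construction.

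To close the argument I must verify $\bar B \in B_p$ and $\bar C \in B_{p'}$. By \eqref{eqn:logassoc}, $\bar B(t) \eqsim t^p/\log(e+t)^{(p-1)(p'-1+\delta-m\epsilon p')}$ (using $p/p' = p-1$), so the $B_p$ integral \eqref{Bp} converges precisely when $(p-1)(p'-1+\delta-m\epsilon p') > 1$, i.e., $\delta > m\epsilon p'$. Analogously, $\bar C \in B_{p'}$ reduces to $\delta > m\epsilon p$. Both inequalities are guaranteed by the hypothesis $\epsilon < \delta/(m\max\{p, p'\})$, and hence Theorem \ref{thm:oscclassbump} applies and yields the desired norm inequality with constant $\mathsf{K}\|b\|^m_{\mathsf{Osc}(\Phi)}$.

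The proof is essentially a log-exponent bookkeeping exercise. The only place where one could slip is keeping the powers on the logarithms consistent when passing from $X^{-1}$ to $B^{-1}$, and when converting the $B_p$ growth condition into a sharp constraint on $\epsilon$; there is no deeper technical obstacle, since the heavy lifting has already been done in Theorem \ref{thm:oscclassbump}.
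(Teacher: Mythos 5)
Your proof is correct and takes essentially the same approach as the paper's: both reduce to Theorem \ref{thm:oscclassbump} with $\Phi(t)=\exp(t^{1/\epsilon})-1$, identical $X, Y$, and identical $B, C$ (the paper writes the log-exponents as $p'-1+\alpha$, $p-1+\beta$ with $\alpha=\delta-\epsilon m p'$, $\beta=\delta-\epsilon m p$, which match your $\delta-m\epsilon p'$ and $\delta-m\epsilon p$). The only difference is presentational—you derive $B$ and $C$ from the constraint $\Phi^{-1}(t)^m X^{-1}(t)\lesssim B^{-1}(t)$ rather than postulating them and verifying.
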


\begin{proof}[Proof of Theorem \ref{thm:optimalbumpexpclass}]
  Again we will use Theorem \ref{thm:oscclassbump}. Let $\delta>0$ be
  as in the statement of theorem. Define
$$\alpha=\delta-\ep m p' \quad \text{and} \quad \beta=\delta-\ep m p,$$
so that $\al,\beta>0$.   Now define
\begin{align*}B(t)&=t^{p'}\log(e+t)^{{p'}-1+\al}\\
C(t)&=t^{p}\log(e+t)^{p-1+\beta}
\end{align*}
so that $$\bar{B}(t)\eqsim\frac{t^{p}}{\log(e+t)^{1+\frac{p}{p'}\al}} \ \ \text{and} \ \ \bar{C}(t)\eqsim\frac{t^{p}}{\log(e+t)^{1+\frac{p'}{p}\beta}},$$
which satisfy $\bar{B}\in B_{p}$ and $\bar{C}\in B_{p'}$.
Further, define $X,Y,$ and $\Phi$ by 
\begin{align*}
X(t)&=t^{p'}\log(e+t)^{p'-1+\delta},\\
Y(t)&=t^{p}\log(e+t)^{p-1+\delta},\\
\Phi(t)&=\exp(t^{\frac1\ep})-1.
\end{align*} 
Then we have that
$$\Phi^{-1}(t)^mX^{-1}(t)\eqsim
\log(e+t)^{m\ep}\frac{t^{\frac{1}{p'}}}{\log(e+t)^{\frac1p+\frac{\delta}{p'}}}
=\frac{t^{\frac{1}{p'}}}{\log(e+t)^{\frac1p+\frac{\al}{p'}}}\eqsim B^{-1}(t);$$
similarly,
$$\Phi^{-1}(t)^mY^{-1}(t)\eqsim C^{-1}(t),$$ 
and so  the hypotheses of Theorem \ref{thm:oscclassbump} are satisfied.  
\end{proof}

The proof of Corollary \ref{cor:doubleexp} follows a similar argument
to that of Corollary \ref{thm:epsbump} and we will only sketch it here.
Define the Young functions 
\begin{align*}B(t)&=t^{p'}\log(e+t)^{p'-1}\log[\log(e^e+t)]^{p'-1+\delta},\\
C(t)&=t^{p}\log(e+t)^{p-1}\log[\log(e^e+t)]^{p-1+\delta},\\
X(t)&=t^{p'}\log(e+t)^{p'-1}\log[\log(e+t)]^{(1+m\ep)p'-1+\delta},\\
Y(t)&=t^{p}\log(e+t)^{p-1}\log[\log(e+t)]^{(1+m\ep)p-1+\delta},\\
\Phi(t)&=\exp[\exp(t^{\frac1\ep})]-e.
\end{align*}
Then \eqref{eqn:logloginv} and \eqref{eqn:loglogassoc} imply
$$B^{-1}(t)\eqsim
\frac{t^{\frac{1}{p'}}}{\log(e+t)^{\frac1p}\log[\log(e^e+t)]^{\frac1p+\frac{\delta}{p'}}}, $$
$$X^{-1}(t)\eqsim
\frac{t^{\frac{1}{p'}}}{\log(e+t)^{\frac1p}\log[\log(e^e+t)]^{m\ep+\frac1p+\frac{\delta}{p'}}}; $$
$$\Phi^{-1}(t)\eqsim\log[\log(e+t)]^\ep.$$
hence,
$$\Phi^{-1}(t)^mX^{-1}(t)\eqsim \log[\log(e^e+t)]^{m\ep}\frac{t^{\frac{1}{p'}}}{\log(e+t)^{\frac1p}\log[\log(e^e+t)]^{m\ep+\frac1p+\frac{\delta}{p'}}}\eqsim B^{-1}(t).$$

\section{Roots of $BMO$ functions}
\label{section:roots}

In this section we prove Theorem \ref{thm:bkinBMO}.  Recall that that
for $a>1$ we define the space 
$$\sqrt[a]{BMO}=\{b\in L^1_{\textsf{loc}}(\R^n): b\geq 0 \ \text{and}
\ b^a\in BMO\}.$$
Some of what we do is implicit in~\cite{MR1239426}, however, here we
give a more complete and systematic treatment.  We will make extensive use of
the function $F(x)=x^{1/a}$, defined on $x\geq 0$, which satisfies $F(x^a)=x$.  Since
$a>1$, $F$ is H\"older continuous of order $1/a$, that is,
\begin{equation} \label{eqn:holder}|F(x)-F(y)|\leq |x-y|^{\frac1a}.\end{equation}

We first  observe that $\sqrt[a]{BMO}\subseteq BMO$, when $a>1$.  
Indeed, given a cube $Q$ and $b\in \sqrt[a]{BMO}$ let $c_Q=F\big((b^a)_Q\big)$.  Then
\begin{multline*}
  \dashint_Q |b(x)-c_Q|\,dx=\dashint_Q
  |F(b(x)^a)-F\big((b^a)_Q\big)|\,dx\\
  \leq \dashint_Q |b(x)^a-(b^a)_Q)|^{\frac1a}\,dx\leq
  \|b^a\|_{BMO}^{\frac1a}.
\end{multline*} 

\begin{proof}[Proof of Theorem \ref{thm:bkinBMO}]
  Let $b\in \sqrt[a]{BMO}$;  without loss of generality we may assume
  that $\|b^a\|_{BMO}=1$.  The general case when $\|b^a\|_{BMO}\not=0$
  now follows by homogeneity if we replace $b$ by $b/\|b^a\|^{\frac1a}_{BMO}$.
  Let $u=b^a$ so that $u\in BMO$.   By \eqref{eqn:JN} there exist constants $c,C>0$ such that
$$\dashint_Q \exp(c|u-u_Q|)\leq C$$
for all cubes $Q$.  Let $F(x)=x^{\frac1a}$, $x\geq 0$; then $F(u)=b$.  Now fix a cube $Q$ and let $A=\frac{c}{2^a}$.  Then
\begin{align*}
\int_Q\exp(A|b-b_Q|^a)&= \int_Q\exp(A|b-F(u_Q)+F(u_Q)-b_Q|^a)\\
&\leq \int_Q\exp(c|b-F(u_Q)|^a+c|F(u_Q)-b_Q|^a)\\
&=  \exp(c|F(u_Q)-b_Q|^a)\int_Q\exp(c|F(u)-F(u_Q)|^a)\\
&\leq  \exp(c|F(u_Q)-b_Q|^a)\int_Q\exp(c|u-u_Q|)\\
&\leq  C\exp(c|F(u_Q)-b_Q|^a))|Q|,
\end{align*}
where we used inequality \eqref{eqn:holder} in the second to last
inequality. Then, since $a>1$,
\begin{multline*}
  |F(u_Q)-b_Q|^a
  =\left|\dashint_Q (F(u_Q)-b(x))\,dx\right|^a
  \leq \dashint_Q|b(x)-F(u_Q)|^a\,dx\\
  = \dashint_Q |F(u)-F(u_Q)|^a\leq \dashint_Q |u-u_Q|,
\end{multline*}
where we again  used inequality \eqref{eqn:holder}.  Hence, by
Jensen's inequality,  
$$\exp(c|F(u_Q)-b_Q|^a))\
\leq \exp\left(c\dashint_Q|u-u_Q|\right)
\leq \dashint_Q\exp(c|u-u_Q|)\leq C.$$
Therefore,
$$\int_Q\exp(A|b-b_Q|^a)\leq C^2|Q|,$$
which in turn implies that
$$\|b-b_Q\|_{\exp(L^a),Q}\leq K,$$
where $K$ depends on $A$ and $C$. Thus $b\in \mathsf{Osc}(\exp L^a)$. 
\end{proof}

\section{Necessary conditions for the Hilbert transform}
\label{section:necc}

In this section we prove the necessity conditions for the
commutators of the Hilbert transform.  We will follow the approach of
Hyt\"onen \cite{Hy1} for the Beurling transform (see also
\cite{HyLiOi}), although we note that these proofs are for
\emph{unweighted} estimates.

\begin{proof}[Proof of Theorems \ref{necHilbert} and \ref{necHilbertiterated}]
We first prove the case when $m=1$.  Suppose $(u,v)$ are weights and $b\in L^1_{\textsf{loc}}(\R)$ such that $[b,H]:L^p(v)\ra L^p(u)$ with
$$\|[b,H]f\|_{L^p(u)}\leq \mathsf{C}\|f\|_{L^p(v)}.$$  
Let $I=[a,b]$ be a fixed interval and $c=(a+b)/2$ be its center.
Define $$f(x)=\sgn(b(x)-b_I)|b(x)-b_I|^{p-1}{\mathbf 1}_I(x);$$
then
\begin{align*}
  & \int_I |b(x)-b_I|^pu(x)\,dx \\
  & \qquad \qquad=\int_I (b(x)-b_I)f(x)u(x)\,dx\\
& \qquad \qquad=\int_I\dashint_I (b(x)-b(y))f(x)u(x)\,dydx\\
& \qquad \qquad=\int_I\dashint_I \frac{(b(x)-b(y))}{x-y}(x-y)f(x)u(x)\,dydx\\
& \qquad \qquad=\int_I\dashint_I \frac{(b(x)-b(y))}{x-y}(x-c+c-y)f(x)u(x)\,dydx\\
& \qquad \qquad=\int_\R\left(\int_\R \frac{(b(x)-b(y))}{x-y}{\mathbf 1}_I(y)\,dy\right)\frac{x-c}{|I|}{\mathbf 1}_I(x)f(x)u(x)\,dx\\
& \qquad \qquad\quad +\int_I\left(\int_\R \frac{(b(x)-b(y))}{x-y}\frac{c-y}{|I|}{\mathbf 1}_I(y)\,dy\right){\mathbf 1}_I(x)f(x)u(x)\,dydx\\
& \qquad \qquad=\int_\R [b,H]({\mathbf 1}_I)(x)g_c(x)f(x)u(x)\,dx-\int_\R [b,H](g_c)(x)f(x)u(x)\,dx,
\end{align*}
where 
$$g_c(u)=\frac{u-c}{|I|}\mathbf 1_I(u).$$
Note that $\text{supp}(g_c)\subseteq I$ and $g\in L^\infty$ with
$\|g_c\|_\infty\leq \frac12.$ Hence,
\begin{align*}\left|\int_\R [b,H]({\mathbf 1}_I)(x)g_c(x)f(x)u(x)\,dx\right|&\leq \int_\R |[b,H]({\mathbf 1}_I)(x)g_c(x)||f(x)|u(x)\,dx\\
& \leq \|[b,H]({\mathbf 1}_I)\|_{L^p(u)}\|g_cf\|_{L^{p'}(u)}\\
&\leq \frac{\mathsf{C}}{2}\|{\mathbf 1}_I\|_{L^p(v)}\|f\|_{L^{p'}(u)}\\
&\leq \frac{\mathsf{C}}{2}v(I)^{\frac1p}\left(\int_I|b-b_I|^pu\right)^{\frac{1}{p'}}.
\end{align*}
Similarly, the second term satisfies
\begin{multline*}
  \left|\int_\R [b,H](g_c)(x)f(x)u(x)\,dx\right|
  \leq \|[b,H](g_c)\|_{L^p(u)}\|f\|_{L^{p'}(u)}\\
  \leq \mathsf{C} \|g_c\|_{L^p(v)} \|f\|_{L^{p'}(u)}
=\frac{\mathsf{C}}{2}v(I)^{\frac1p}\left(\int_I|b-b_I|^pu\right)^{\frac{1}{p'}}.
\end{multline*}
Thus, we have
\begin{multline*}\int_I |b-b_I|^pu \leq \left|\int_\R [b,H]({\mathbf 1}_I)(x)g_c(x)f(x)u(x)\,dx\right|+\left|\int_\R [b,H](g_c)(x)f(x)u(x)\,dx\right|\\ \leq \mathsf{C}v(I)^{\frac1p}\left(\int_I|b-b_I|^pu\right)^{\frac{1}{p'}}.\end{multline*}
It now follows that
 $$\left(\frac{1}{v(I)}\int_I |b-b_I|^pu\right)^{\frac1p}\leq \mathsf{C}.$$

 The other estimate,
$$\left(\frac{1}{u^{-\frac{p'}{p}}(I)}\int_I
  |b-b_I|^{p'}v^{-\frac{p'}{p}}\right)^{\frac1{p'}}\leq \mathsf{C},$$
follows from duality by interchanging the roles of the weights $(u,v)$
with $(v^{-\frac{p'}{p}},u^{-\frac{p'}{p}})$.

\medskip

For the case $m=2k$ we will assume that $b$ is real valued.  The even iterated commutators,
$$H^{2k}_bf(x)=p.v.\int_\R\frac{(b(x)-b(y))^{2k}}{x-y}f(y)\,dy,$$
have a positivity that we will exploit.  Let 
$$f(x)=|b(x)-b_I|^{2k(p-1)}{\mathbf 1}_I(x),$$
and notice that $f\geq 0$. Then
\begin{multline*}\int_I |b(x)-b_I|^{2kp}u(x)\,dx=\int_I |b(x)-b_I|^{2k}f(x)u(x)\,dx\\
=\int_I \left|\dashint_I(b(x)-b(y))\,dy\right|^{2k}f(x)u(x)\,dx\leq \int_I \dashint_I(b(x)-b(y))^{2k}\,f(x)u(x)\,dydx.\end{multline*}
It is exactly at this point that we have used that $m=2k$ is even and
$b$ is real, since $$|b(x)-b(y)|^{2k}=(b(x)-b(y))^{2k}.$$
Then
\begin{align*}
  & \int_I |b(x)-b_I|^{2kp}u(x)\,dx \\
  & \qquad \qquad \leq  \int_I \dashint_I(b(x)-b(y))^{2k}\,f(x)u(x)\,dydx\\
  &\qquad \qquad
    =\int_\R\left(\int_\R \frac{(b(x)-b(y))^{2k}}{x-y}{\mathbf 1}_I(y)\,dy\right)\frac{x-c}{|I|}{\mathbf 1}_I(x)f(x)u(x)\,dx\\
  &\qquad \qquad \quad
    +\int_I\left(\int_\R \frac{(b(x)-b(y))^{2k}}{x-y}
    \frac{c-y}{|I|}{\mathbf 1}_I(y)\,dy\right)\chi_I(x)f(x)u(x)\,dydx\\
  &\qquad \qquad =\int_\R H^{2k}_b({\mathbf 1}_I)(x)g_c(x)f(x)u(x)\,dx
    -\int_\R H^{2k}_b(g_c)(x)f(x)u(x)\,dx.
\end{align*}
By H\"older's inequality and the boundedness of $H^{2k}_b:L^p(v)\ra
L^p(u)$.  we get that
\begin{multline*} \int_I |b-b_I|^{2kp}u\leq \|H^{2k}_b({\mathbf 1}_I)\|_{L^p(u)}\|g_cf\|_{L^{p'}(u)}+\|H^{2k}_b(g_c)\|_{L^p(u)}\|f\|_{L^{p'}(u)}\\
\leq \mathsf{C}\|{\mathbf 1}_I\|_{L^p(v)}\|g_c\|_\infty\|f\|_{L^{p'}(u)}+\mathsf{C}\|g_c\|_{L^p(v)}\|f\|_{L^{p'}(u)}\leq \mathsf{C}v(I)^{\frac1p}\left(\int_I |b-b_I|^{2kp}u\right)^{\frac1{p'}}.\end{multline*}
The rest of the proof follows as in the case $m=1$.
\end{proof}

\medskip

Finally, we prove Corollary~\ref{cor:one-wt-H}.  First note that if
$w\in A_p$, then given any cube $Q$
$$\left(\frac{|Q|}{w(Q)}\right)^{\frac1p}\leq \left(\dashint_Q w^{-\frac{p'}{p}}\right)^{\frac1{p'}}\leq [w]^{\frac1p}_{A_p}\left(\frac{|Q|}{w(Q)}\right)^{\frac1p}.$$
Therefore, we have
\begin{multline*}
  \left(\frac{1}{w(Q)}\int_I |b-b_Q|^{mp}w\right)^{\frac1p} \\
  \leq \left(\dashint_Q |b-b_Q|^{mp}w\right)^{\frac1{p}}
  \left(\dashint_Q w^{-\frac{p'}{p}}\right)^{\frac1{p'}}
\leq [w]^{\frac1p}_{A_p}\left(\frac{1}{w(Q)}\int_Q
  |b-b_Q|^{mp}w\right)^{\frac1p};
\end{multline*}
 similarly,
 \begin{multline*}
   \left(\frac{1}{w^{-\frac{p'}{p}}(Q)}\int_Q
     |b-b_Q|^{mp}w^{-\frac{p'}{p}}\right)^{\frac1{p'}} \\
   \leq \left(\dashint_Q w\right)^{\frac1{p}}
   \left(\dashint_Q|b-b_Q|^{mp}w^{-\frac{p'}{p}}\right)^{\frac1{p'}}
   \leq [w]^{\frac1p}_{A_p}\left(\frac{1}{w^{-\frac{p'}{p}}(Q)}
     \int_Q |b-b_Q|^{mp}w^{-\frac{p'}{p}}\right)^{\frac1{p'}}.
 \end{multline*}

 \begin{proof}[Proof of Corollary~\ref{cor:one-wt-H}]
   First suppose that $H^m_b$ is bounded on $L^p(w)$ for $m\in \{1\}\cup 2\N$, with operator norm $\|H_b^mf\|_{L^p(w)}\leq \mathsf{C}\|f\|_{L^p(w)}$.  If $I$ is a fixed interval, then by H\"older's inequality and \eqref{neccond} we have
   \begin{multline*}\label{eqn:bmo}
     \dashint_I |b-b_I|^m\leq
     \left(\dashint_I|b-b_I|^{mp}w\right)^{\frac1p}
     \left(\dashint_I w^{-\frac{p'}{p}}\right)^{\frac1{p'}} \\ 
\leq [w]_{A_p}^{\frac1p}
\left(\frac{1}{w(I)}\int_I|b-b_I|^{mp}w\right)^{\frac1p}
\leq \mathsf{C}[w]_{A_p}^{\frac1p}.
\end{multline*}
Since $I$ is arbitrary, we get that $b\in BMO$ with
$$\|b\|_{BMO}^m\leq \mathsf{C}[w]_{A_p}^{\frac1p}.$$

On the other hand, suppose $b\in BMO$.  We will find Young functions
$A,B,C,D$ such that the weight $w$ and $b$ satisfy the hypothesis of
Theorem~\ref{thm:main},  thus showing that $H^m_b$ is bounded on
$L^p(w)$. Since $w\in A_p$, there exists $r>1$,  a reverse H\"older
exponent, such that there exists a constant $C$ with
$$\left(\dashint_I w^r\right)^{\frac1r}\leq C\dashint_I w, \ \ \text{and} \ \ \left(\dashint_I w^{-r\frac{p'}{p}}\right)^{\frac1r}\leq C\dashint_I w^{-\frac{p'}{p}}$$
for any interval $I$. If $1<s<r$, then
\begin{multline*}
  \left(\dashint_I |b-b_I|^{msp}w^s\right)^{\frac{1}{sp}}
  \left(\dashint_I w^{-s\frac{p'}{p}}\right)^{\frac{1}{sp'}}
  \leq \left(\dashint_I
    |b-b_I|^{m\frac{srp}{r-s}}\right)^{\frac{r-s}{srp}}
  \left(\dashint_Iw^r\right)^{\frac{1}{rp}}
  \left(\dashint_I w^{-r\frac{p'}{p}}\right)^{\frac{1}{rp'}}\\
  \lesssim \|b\|^m_{BMO}[w]^{\frac1p}_{A_p};
\end{multline*}
 similarly,
 $$\left(\dashint_I w^{s}\right)^{\frac{1}{sp}}
 \left(\dashint_I
   |b-b_I|^{msp'}w^{-s\frac{p'}{p}}\right)^{\frac{1}{sp'}}
 \lesssim \|b\|^m_{BMO}[w]^{\frac1p}_{A_p}.$$
In particular,  condition \eqref{twoweightBMO},
$$\sup_Q
\big\|(b-b_Q)^{m}w^\frac1p\big\|_{A,Q}\big\|w^{-\frac1p}\big\|_{B,Q}
+\sup_Q
\big\|w^\frac1p\big\|_{C,Q}\big\|(b-b_Q)^mw^{-\frac1p}\big\|_{D,Q}
<\infty,$$ is satisfied with $A(t)=C(t)=t^{sp}$ and
$B(t)=D(t)=t^{sp'}$. Moreover, we have $\bar{A},\bar{C}\in B_{p'}$ and
$\bar{B},\bar{D}\in B_p$ since $s>1$. Theorem \ref{thm:main} now
implies that if $b\in BMO$, then $H^m_b:L^p(w)\ra L^p(w)$.
\end{proof}

%If $m\in \{1\}\cup 2\N$ and $H^m_b$ is bounded on $L^p(w)$, then by inequalities \eqref{neccond} and \eqref{eqn:bmo}, the mean oscillations of $b$ are bounded for all intervals, and hence $b\in BMO$.  On the other hand suppose $b\in BMO$ and let $r>1$ be a reverse H\"older exponent such that there exists a constant $C$ with
%$$\left(\dashint_I w^r\right)^{\frac1r}\leq C\dashint_I w, \ \ \text{and} \ \ \left(\dashint_I w^{-r\frac{p'}{p}}\right)^{\frac1r}\leq C\dashint_I w^{-\frac{p'}{p}}$$
%for any interval $I$. Let $s$ be any number such that $1<s<r$, then 
%\begin{multline*}\left(\dashint_I |b-b_I|^{msp}w^s\right)^{\frac{1}{sp}}\left(\dashint_I w^{-s\frac{p'}{p}}\right)^{\frac{1}{sp'}}\leq \left(\dashint_I |b-b_I|^{m\frac{srp}{r-s}}\right)^{\frac{r-s}{srp}}\left(\dashint_Iw^r\right)^{\frac{1}{rp}}\left(\dashint_I w^{-r\frac{p'}{p}}\right)^{\frac{1}{rp'}}\\ \lesssim \|b\|^m_{BMO}[w]^{\frac1p}_{A_p}\end{multline*}
%and similarly,
%$$\left(\dashint_I w^{s}\right)^{\frac{1}{sp}}\left(\dashint_I |b-b_I|^{msp'}w^{-s\frac{p'}{p}}\right)^{\frac{1}{sp'}}\lesssim \|b\|^m_{BMO}[w]^{\frac1p}_{A_p}.$$
%In particular, the condition \ref{twoweightBMO} with $m=1$
%$$\sup_Q \big\|(b-b_Q)^{m}w^\frac1p\big\|_{A,Q}\big\|w^{-\frac1p}\big\|_{B,Q}+\sup_Q \big\|w^\frac1p\big\|_{C,Q}\big\|(b-b_Q)^mw^{-\frac1p}\big\|_{D,Q}<\infty$$ 
%is satisfied with $A(t)=C(t)=t^{sp}$ and $B(t)=D(t)=t^{sp'}$. Moreover, we have $\bar{A},\bar{C}\in B_{p'}$ and $\bar{B},\bar{D}\in B_p$ since $s>1$. Theorem \ref{thm:main} now implies that if $b\in BMO$ then $[b,H]:L^p(w)\ra L^p(w)$.  

\bibliographystyle{plain}
\bibliography{Newcommutator}

\begin{thebibliography}{10}

\bibitem{MR3302574}
T.C. Anderson, D.~Cruz-Uribe, and K.~Moen.
\newblock Logarithmic bump conditions for {C}alder\'{o}n-{Z}ygmund operators on
  spaces of homogeneous type.
\newblock {\em Publ. Mat.}, 59(1):17--43, 2015.

\bibitem{MR4055156}
\'{A}. B\'{e}nyi, J.M. Martell, K.~Moen, E.~Stachura, and R.H. Torres.
\newblock Boundedness results for commutators with {BMO} functions via weighted
  estimates: a comprehensive approach.
\newblock {\em Math. Ann.}, 376(1-2):61--102, 2020.

\bibitem{MR805955}
S.~Bloom.
\newblock A commutator theorem and weighted {BMO}.
\newblock {\em Trans. Amer. Math. Soc.}, 292(1):103--122, 1985.

\bibitem{MR3716205}
L.~Chaffee and D.~Cruz-Uribe.
\newblock Necessary conditions for the boundedness of linear and bilinear
  commutators on {B}anach function spaces.
\newblock {\em Math. Inequal. Appl.}, 21(1):1--16, 2018.

\bibitem{MR2869172}
D.~Chung, M.C. Pereyra, and C.~P\'erez.
\newblock Sharp bounds for general commutators on weighted {L}ebesgue spaces.
\newblock {\em Trans. Amer. Math. Soc.}, 364(3):1163--1177, 2012.

\bibitem{CRW}
R.R. Coifman, R.~Rochberg, and G.~Weiss.
\newblock Factorization theorems for {H}ardy spaces in several variables.
\newblock {\em Ann. of Math. (2)}, 103(3):611--635, 1976.

\bibitem{CMP07}
D.~Cruz-Uribe, J.M. Martell, and C.~P\'{e}rez.
\newblock Sharp two-weight inequalities for singular integrals, with
  applications to the {H}ilbert transform and the {S}arason conjecture.
\newblock {\em Adv. Math.}, 216(2):647--676, 2007.

\bibitem{MR2797562}
D.~Cruz-Uribe, J.M. Martell, and C.~P{\'e}rez.
\newblock {\em Weights, extrapolation and the theory of {R}ubio de {F}rancia},
  volume 215 of {\em Operator Theory: Advances and Applications}.
\newblock Birkh\"auser/Springer Basel AG, Basel, 2011.

\bibitem{CMP09}
D.~Cruz-Uribe, J.M. Martell, and C.~P\'{e}rez.
\newblock Sharp weighted estimates for classical operators.
\newblock {\em Adv. Math.}, 229(1):408--441, 2012.

\bibitem{MR2918187}
D.~Cruz-Uribe and K.~Moen.
\newblock Sharp norm inequalities for commutators of classical operators.
\newblock {\em Publ. Mat.}, 56(1):147--190, 2012.

\bibitem{MR1713140}
D.~Cruz-Uribe and C.~P\'{e}rez.
\newblock Sharp two-weight, weak-type norm inequalities for singular integral
  operators.
\newblock {\em Math. Res. Lett.}, 6(3-4):417--427, 1999.

\bibitem{MR1793688}
D.~Cruz-Uribe and C.~P\'{e}rez.
\newblock Two-weight, weak-type norm inequalities for fractional integrals,
  {C}alder\'{o}n-{Z}ygmund operators and commutators.
\newblock {\em Indiana Univ. Math. J.}, 49(2):697--721, 2000.

\bibitem{cruz-uribe-perez02}
D.~Cruz-Uribe and C.~P{\'e}rez.
\newblock On the two-weight problem for singular integral operators.
\newblock {\em Ann. Sc. Norm. Super. Pisa Cl. Sci. (5)}, 1(4):821--849, 2002.

\bibitem{MR3167497}
D.~Cruz-Uribe, A.~Reznikov, and A.~Volberg.
\newblock Logarithmic bump conditions and the two-weight boundedness of
  {C}alder\'{o}n-{Z}ygmund operators.
\newblock {\em Adv. Math.}, 255:706--729, 2014.

\bibitem{MR3451366}
I.~Holmes, M.T. Lacey, and B.~Wick.
\newblock Bloom's inequality: commutators in a two-weight setting.
\newblock {\em Arch. Math. (Basel)}, 106(1):53--63, 2016.

\bibitem{MR3606434}
I.~Holmes, M.T. Lacey, and B.~Wick.
\newblock Commutators in the two-weight setting.
\newblock {\em Math. Ann.}, 367(1-2):51--80, 2017.

\bibitem{Hy1}
T.P Hyt\"onen.
\newblock Of commutators and jacobians.
\newblock {\em Preprint, \textsf{arXiv:2006.11896}}, 2019.

\bibitem{HyLiOi}
T.P Hyt\"onen, K.~Li, and T.~Oikari.
\newblock Iterated commutators under a joint condition on the tuple of
  multiplying functions.
\newblock {\em Preprint, \textsf{arXiv:1910.00364}}, 2020.

\bibitem{MR4050113}
G.H. Iba\~{n}ez Firnkorn and I.P. Rivera-R\'{\i}os.
\newblock Sparse and weighted estimates for generalized {H}\"{o}rmander
  operators and commutators.
\newblock {\em Monatsh. Math.}, 191(1):125--173, 2020.

\bibitem{IsPoTr}
J.~Isralowitz, S.~Pott, and S.~Treil.
\newblock {Commutators in the two scalar and matrix weighted setting}.
\newblock {\em preprint}, 2017.
\newblock \textsf{arXiv:2001.11182}.

\bibitem{MR1239426}
R.L. Johnson and C.J. Neugebauer.
\newblock Properties of {BMO} functions whose reciprocals are also {BMO}.
\newblock {\em Z. Anal. Anwendungen}, 12(1):3--11, 1993.

\bibitem{Lac1}
M.T. Lacey.
\newblock Two-weight inequality for the {H}ilbert transform: a real variable
  characterization, {II}.
\newblock {\em Duke Math. J.}, 163(15):2821--2840, 2014.

\bibitem{MR3532130}
M.T. Lacey.
\newblock On the separated bumps conjecture for {C}alder\'{o}n-{Z}ygmund
  operators.
\newblock {\em Hokkaido Math. J.}, 45(2):223--242, 2016.

\bibitem{MR3285857}
M.T. Lacey, E.T. Sawyer, C.-Y. Shen, and I.~Uriarte-Tuero.
\newblock Two-weight inequality for the {H}ilbert transform: a real variable
  characterization, {I}.
\newblock {\em Duke Math. J.}, 163(15):2795--2820, 2014.

\bibitem{Lerner20}
A.K. Lerner.
\newblock On separated bump conditions for calder\'{o}n-zygmund operators.
\newblock {\em preprint}, 2020.
\newblock \textsf{arXiv:2008.05866 }.

\bibitem{MR3695871}
A.K. Lerner, S.~Ombrosi, and I.P. Rivera-R\'{\i}os.
\newblock On pointwise and weighted estimates for commutators of
  {C}alder\'{o}n-{Z}ygmund operators.
\newblock {\em Adv. Math.}, 319:153--181, 2017.

\bibitem{MR3919564}
A.K. Lerner, S.~Ombrosi, and I.P. Rivera-R\'{\i}os.
\newblock Commutators of singular integrals revisited.
\newblock {\em Bull. Lond. Math. Soc.}, 51(1):107--119, 2019.

\bibitem{LORR}
A.K. Lerner, S.~Ombrosi, and I.P. Riveria-R\'ios.
\newblock On two weight estimates for iterated commutators.
\newblock {\em Preprint, \textsf{arXiv:2006.11896}}, 2020.

\bibitem{MR3127385}
F.~Nazarov, A.~Reznikov, S.~Treil, and A.~Volberg.
\newblock A {B}ellman function proof of the {$L^2$} bump conjecture.
\newblock {\em J. Anal. Math.}, 121:255--277, 2013.

\bibitem{perez94}
C.~P{\'e}rez.
\newblock Two weighted inequalities for potential and fractional type maximal
  operators.
\newblock {\em Indiana Univ. Math. J.}, 43(2):663--683, 1994.

\bibitem{MR1317714}
C.~P\'{e}rez.
\newblock Endpoint estimates for commutators of singular integral operators.
\newblock {\em J. Funct. Anal.}, 128(1):163--185, 1995.

\bibitem{MR1481632}
C.~P\'{e}rez.
\newblock Sharp estimates for commutators of singular integrals via iterations
  of the {H}ardy-{L}ittlewood maximal function.
\newblock {\em J. Fourier Anal. Appl.}, 3(6):743--756, 1997.

\bibitem{Saw1}
E.T. Sawyer.
\newblock A characterization of a two-weight norm inequality for maximal
  operators.
\newblock {\em Studia Math.}, 75(1):1--11, 1982.

\bibitem{Saw2}
E.T. Sawyer.
\newblock A characterization of two weight norm inequalities for fractional and
  {P}oisson integrals.
\newblock {\em Trans. Amer. Math. Soc.}, 308(2):533--545, 1988.

\end{thebibliography}

\end{document}